\newtheorem{theorem}{Theorem}[section]
\newtheorem{lemma}[theorem]{Lemma}
\newtheorem{proposition}[theorem]{Proposition}
\newtheorem{corollary}[theorem]{Corollary}
\theoremstyle{definition}
\newtheorem{remark}[theorem]{Remark}
\newtheorem{example}[theorem]{Example}
\newcommand\Tstrut{\rule{0pt}{2ex}}         
\newcommand\Bstrut{\rule[-1.5ex]{0pt}{0pt}}   
\newcommand{\pdm}{m_{\mathrm{proj}}}
\newcommand{\PGL}{\mathrm{PGL}}
\newcommand{\FF}{\mathbb{F}}
\newcommand{\RR}{\mathbb{R}}
\newcommand{\ZZ}{\mathbb{Z}}
\newcommand{\CC}{\mathbb{C}}
\newcommand{\G}{\mathscr{G}}
\newcommand{\GG}{\mathbf{G}}
\renewcommand{\SS}{\mathbf{S}}
\newcommand{\1}{\mathbf{1}}
\newcommand{\hgraph}[1]{{\mathsf{HG} }(#1)}
\newcommand{\rG}{\mathrm{G}}
\renewcommand{\AA}{\mathbf{A}}
\newcommand{\sing}[2]{\mathscr{G}_{#1}(#2)}
\newcommand{\esing}[2]{\overline{\mathscr{G}}_{#1}(#2)}
\DeclareMathOperator{\Id}{{\bf 1}}
\DeclareMathOperator{\Cay}{Cay}
\DeclareMathOperator{\SL}{SL}
\DeclareMathOperator{\GL}{GL}
\DeclareMathOperator{\E}{E}
\DeclareMathOperator{\Rep}{Rep}
\DeclareMathOperator{\rk}{rank}
\DeclareMathOperator{\codim}{codim}
\DeclareMathOperator{\Kl}{Kl}
\title{On the chromatic number of structured Cayley graphs}
\author[M. Bardestani]{Mohammad Bardestani}
\address{ DPMMS, Centre for Mathematical Sciences, Wilberforce Road, Cambridge, CB3 0WB.}
\email{mohammad.bardestani@gmail.com}
\author[K. Mallahi-Karai]{Keivan Mallahi-Karai}
\address{Jacobs University Bremen, Campus Ring I, 28759 Bremen, Germany.}
\email{k.mallahikarai@jacobs-university.de }
\keywords{Chromatic number; Finite groups of Lie type; Lang-Weil bound; Kloosterman sum.}
 \subjclass[2010]{20D60, 20G40}
\begin{document}
\maketitle
\begin{abstract}
In this paper, we will study the chromatic number of a family of Cayley graphs that arise from algebraic constructions. Using Lang-Weil bound and representation theory of finite simple groups of Lie type, we will establish lower bounds on the chromatic number of a large family of these graphs. As a corollary we obtain a lower bound for the chromatic number of certain Cayley graphs associated to the ring of $n\times n$ matrices over finite fields, establishing a result for the case of $\SL_n$ parallel to a theorem of Tomon~\cite{Tomon} for $\GL_n$. Moreover, using Weil's bound for Kloosterman sums we will also prove an analogous result for $\SL_2$ over certain finite rings.   
\end{abstract}
\section{Introduction}
Let $\rG$ be a group, and let $S$ be a symmetric subset of $\rG$, that is,  a set satisfying $S^{-1}=S$. Moreover assume that $\1\not\in S$ where $\1$ is the identity element of $\rG$. The Cayley graph of $\rG$ with respect to $S$, denoted by
$\Cay(\rG,S)$, is the graph whose vertex set is identified with $\rG$, and vertices $g_1, g_2 \in \rG$ are declared adjacent if and only if $g_1^{-1}g_2 \in S$. Recall also that the chromatic number of a graph $\mathcal{G}$, denoted by $\chi(\mathcal{G})$, is the least cardinal $c$ such that the vertex set $V(\mathcal{G})$ can be partitioned into $c$ sets (called color classes) such that no color class contains an edge in $\mathcal{G}$. 

The study of chromatic number of Cayley graphs and their subgraphs was first initiated by Babai~\cite{Babai3}. The focus of Babai's paper was on finding Cayley graphs of a given group with a {\it small} chromatic number. For instance, it was shown in~\cite{Babai3} that every solvable group $\rG$ has a generating set $S$ such that the chromatic number of $\Cay(\rG,S)$ is at most $3$. It is easy to see (\cite{Babai3}, Proposition 4.6) that $\chi(\Cay(\rG,S))=2$ for some generating set $S$ if and only if $\rG$ has a subgroup of index $2$. 

Bounding the chromatic number of Cayley graphs from below is a more subtle problem. 
Alon~\cite{Alon13} considered random Cayley graphs of arbitrary finite groups and established strong asymptotically almost sure lower bounds for their chromatic number. In the random model considered in this paper, $S$ is a randomly chosen subset of $\rG$ of a given cardinality $k$. Alon then establishes various lower bounds for the chromatic number of $\Cay(\rG,S)$ that hold with probability converging to $1$ as $n \to \infty$. In order for the bounds to be non-trivial, one needs $k  \gg \log n$. In the opposite direction, Alon also proved that if $\rG$ is abelian, and $k \ll \log \log n$, then with probability tending to $1$ as $n \to \infty$, the inequality $\chi( \Cay(\rG, S)) \le 3$ also holds. 

In this paper we will address similar problems in the case that the pair $(\rG,S)$ arises from an algebraic construction, and can thus be viewed  as highly structured. 
More precisely, let $\GG\subseteq\GL_n$ be a Chevalley group. Such groups are naturally obtained from a simple complex Lie algebra~\cite{Steinberg}. $\GG$ can also be viewed as a group scheme of finite type defined over $\ZZ$, which implies that there is a finite set $\{ f_i \}_{ i \in I}$
of polynomials with integer coefficients in variables $x_{ij}$ such that for any unital ring $R$, the common solutions of $\{ f_i \}_{i \in I}$ form a group, which is denoted by $\GG(R)$. 

The reader interested in concrete examples may consider the special case $\GG=\SL_n$, which is defined by the equation $\det (x_{ij})-1=0$. Note that the set of zeros of this polynomial over any unital ring $R$ defines the group $\SL_n(R)$, consisting of unimodular $n$ by $n$ matrices with entries in $R$. Let also $\tilde{\SS}$ be an affine subscheme of $\GG$ of finite type over $\ZZ$, namely 
\begin{equation}\label{Affine-sub}
 \tilde{\SS}:= \{ (x_{ij}) \in \GG: P_1(x_{ij})= \cdots = P_r(x_{ij})=0\},
\end{equation}
where $P_1, \dots, P_r$ are polynomials with integer coefficients in $x_{ij}$. Since $\GG$ and $\tilde{\SS}$ are defined over $\ZZ$, for any prime power $q$ we can consider the $\FF_q$-points of $\GG$ and $\tilde{\SS}$ denoted by $\GG(\FF_q)$ and $\tilde{\SS}(\FF_q)$. Here $\FF_q$ denotes the finite field with $q$ elements. We always assume that $\Id \not\in \tilde{\SS}$ and denote by 
\begin{equation}\label{G_q}
\G_{\GG,\SS}(\FF_q):=\Cay(\GG(\FF_q),\SS(\FF_q)),\qquad\quad \SS(\FF_q):= \tilde{\SS}(\FF_q) \cup \tilde{\SS}(\FF_q)^{-1},
\end{equation}
the Cayley graph of the group $\GG(\FF_q)$ with respect to the symmetrized set $\SS(\FF_q)$.

In order to make the setup clearer, let us give a concrete example. Let $n \ge 2$ and $q=p^f$ is an odd prime power. The {\bf non-singular graph} associated with $n$ and $q$ is defined by
\begin{equation}\label{Sing-def}
\sing{n}{\FF_q}=\Cay(\SL_n(\FF_q),\SS(\FF_q)),
\end{equation}
 where $\SS\subseteq \SL_n$ is defined by a single polynomial $P(x_{ij})=\det(I+(x_{ij}))$. This example can be obtained by taking $\GG=\SL_n$ and
 $\SS$ the subscheme defined by the single equation $P(x_{ij})=0$. 
 One can readily see that vertices $x,y \in \sing{n}{\FF_q}$ are adjacent if and only if $\det(x+y)=0$. These graphs are closely related to a family of graphs introduced by Anderson and Badawi~\cite{Anderson} under the name {\it regular graph}\footnote{Since the term {\it regular graph} has already a full-time job in graph theory, in interest of clarity, in this paper we will use the terms {\it non-singular graph} and {\it extended non-singular graph} for the specific graphs defined above.}, in which, instead of taking $\SL_n(\FF_q)$ as the vertex set, the larger group $\GL_n(\FF_q)$ has been used; we will call this graph the {\bf extended non-singular} graph associated with $n$ and $q$, and denote it by $\esing{n}{\FF_q}$. 
Evidently $\sing{n}{\FF_q}$ is an induced subgraph of $\esing{n}{\FF_q}$. 

These graphs have been the source of several other investigations. For instance, Akbari, Jamaali and Fakhari~\cite{AJF} showed that the clique number of $\esing{n}{\FF_q}$
is bounded by a universal constant depending on $n$, and independent of $q$ (for odd values of $q$). The following question has been posed by the authors of~\cite{AJF} in the 22nd British Combinatorics Conference 2009~\cite{Cam}:

\bigskip

{\it Problem 525: Given a field $F$ with characteristic other than $2$, define a graph whose vertex set is the set of invertible $n \times n$ matrices over $F$, by putting matrices $A$ and $B$ adjacent if $A + B$ is singular. It is known \cite{AJF} that the clique number of this graph is finite. Problem: Is the chromatic number of this graph finite?
 }
 
\bigskip

Interestingly enough, the answer to this question is negative. In fact,  Tomon~\cite{Tomon} proved that the chromatic number of the finite graph $\esing{n}{\FF_q}$ is at least $(q/4)^{\lfloor  n/2 \rfloor}$, under the assumption that $q=p^f$ is odd, which immediately implies that $\esing{n}{\overline{\FF}_p}$ is infinite. Here $ \overline{\FF}_p$ denotes an algebraic closure of $\FF_p$, and can be naturally viewed as the union of all finite fields $\FF_q$ for $q=p^f$.  Tomon's strategy of proof is to use Hoffman's spectral bound for the chromatic number of 
$\esing{2}{\FF_q}$, where, thanks to the smaller size of the underlying group, the spectrum of the adjacency matrix is more tractable. Having done this, he proceeds to reduce the general case to the case $n=2$. Tomon's negative answer makes this family of graphs particularly interesting in that, thanks to the universal upper bound on the clique size, they exhibit no obvious {\it local} obstruction to the existence of proper colorings with a bounded number of colors (as $q \to \infty$), while the lower bound on chromatic number reveals a {\it global} obstruction. Let us mention in passing that a version of this question over $\RR$ has also been dealt with in \cite{BMK} using a measure-theoretic version of Hoffmann's bound.

In the present paper we intend to show that the same phenomenon is exhibited by the more general family of graphs $\G_{\GG,\SS}(\FF_q)$. More precisely, we will show that as $q \to \infty$, we have $\chi(\G_{\GG,\SS}(\FF_q)) \to \infty$. In fact, similar to \cite{Tomon} we will establish a polynomial lower bound  for  $\chi(\G_{\GG,\SS}(\FF_q))$. Further, we will also show that the large chromatic number is not due to the existence of large cliques; in fact, as we will see, the clique number of $\G_{\GG,\SS}(\FF_q)$ is uniformly bounded as $q \to \infty$ (see Proposition \ref{no-large-clique}). Our first main result is the following:
 
\begin{theorem}\label{main}
Let $\GG\subseteq\GL_n(\CC)$ be a simple and simply-connected Chevalley group of rank $r=\rk \GG$ and dimension $d=\dim \GG$. Let $\tilde{\SS}$ be the affine subscheme of $\GG$, defined by~\eqref{Affine-sub}, and assume that $\tilde{\SS}$ is geometrically irreducible over the generic fiber, and let $m=\codim\tilde{\SS}$ be the codimension of $\tilde{\SS}$ in $\GG$. Then there exists a constant $C=C(d,r,m)$ such that for all but finitely many primes $p\geq 3$
\begin{equation}\label{main-bound}
 \chi(\G_{\GG,\SS}(\FF_q))\geq Cq^{\frac{r-m}{2}},
 \end{equation}
 where the characteristic of $\FF_q$ is $p$ and $\G_{\GG,\SS}(\FF_q)$ is the graph defined by~\eqref{G_q}. 
\end{theorem}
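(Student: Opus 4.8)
The plan is to combine three ingredients: the Lang--Weil estimate to control $|\GG(\FF_q)|$ and $|\SS(\FF_q)|$; the Landazuri--Seitz--Zalesskii lower bound on the dimensions of the nontrivial irreducible representations of $\GG(\FF_q)$; and the Fourier-analytic (Hoffman-type) upper bound for the independence number of a Cayley graph. We may assume $m<r$, since otherwise $q^{(r-m)/2}$ is bounded and, as Lang--Weil gives $\SS(\FF_q)\neq\emptyset$ while $\Id\notin\SS(\FF_q)$, the graph $\G_{\GG,\SS}(\FF_q)$ has an edge and hence $\chi\ge 2$, which already beats $Cq^{(r-m)/2}$ for small $C$. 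For the point counts: $\GG$ is connected, hence geometrically irreducible of dimension $d$, so $|\GG(\FF_q)|=q^{d}(1+O(q^{-1/2}))$; by hypothesis $\tilde{\SS}$ is geometrically irreducible of dimension $d-m$, so $|\tilde{\SS}(\FF_q)|=q^{d-m}(1+O(q^{-1/2}))$; hence $\tfrac12 q^{d-m}\le|\SS(\FF_q)|\le 2q^{d-m}$ and $\tfrac12 q^{d}\le|\GG(\FF_q)|\le 2q^{d}$ once $q$ exceeds a bound depending only on $\GG$ and $\deg P_1,\dots,\deg P_r$. This bound only enlarges the finite set of excluded primes, so the final constant will depend only on $\GG$ (hence only on $d$ and $r$), not on the $P_i$.

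For the representation theory: since $\GG$ is simple and simply connected, $\GG(\FF_q)$ is quasisimple for all but finitely many $p$, and by the Landazuri--Seitz--Zalesskii bounds there is a constant $c=c(\GG)>0$ with $\dim\rho\ge c\,q^{r}$ for every nontrivial irreducible complex representation $\rho$ of $\GG(\FF_q)$ (including those nontrivial on the bounded centre). Now write $G=\GG(\FF_q)$, $S=\SS(\FF_q)$, and $\widehat{\1_S}(\rho)=\sum_{s\in S}\rho(s)$ for $\rho$ unitary irreducible; expanding $\|\widehat{\1_S}(\rho)\|_{\mathrm{HS}}^{2}=\sum_{s,t\in S}\Tr\rho(st^{-1})$, summing over $\rho$ with weight $\dim\rho$, and applying the orthogonality relation $\sum_{\rho}(\dim\rho)\Tr\rho(g)=|G|\,\delta_{g,e}$ gives
\[
\sum_{\rho\neq\1}(\dim\rho)\,\bigl\|\widehat{\1_S}(\rho)\bigr\|_{\mathrm{HS}}^{2}=|S|\bigl(|G|-|S|\bigr),
\]
the sum being over nontrivial irreducibles. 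Combining this with $\|\widehat{\1_S}(\rho)\|_{\mathrm{op}}^{2}\le\|\widehat{\1_S}(\rho)\|_{\mathrm{HS}}^{2}$ and with $\dim\rho\ge c\,q^{r}$ yields
\[
\Lambda:=\max_{\rho\neq\1}\bigl\|\widehat{\1_S}(\rho)\bigr\|_{\mathrm{op}}\le\sqrt{\frac{|S|\,|G|}{c\,q^{r}}}.
\]
Since $S$ is symmetric, the adjacency operator of the $|S|$-regular graph $\G_{\GG,\SS}(\FF_q)$ is self-adjoint with largest eigenvalue $|S|$ (from the trivial representation) and smallest eigenvalue at least $-\Lambda$, so the Hoffman ratio bound gives $\alpha(\G_{\GG,\SS}(\FF_q))\le\Lambda\,|G|/|S|$, whence
\[
\chi(\G_{\GG,\SS}(\FF_q))\ \ge\ \frac{|G|}{\alpha(\G_{\GG,\SS}(\FF_q))}\ \ge\ \frac{|S|}{\Lambda}\ \ge\ \sqrt{\frac{c\,q^{r}\,|S|}{|G|}}\ \ge\ \sqrt{\frac{c}{4}}\;q^{(r-m)/2}
\]
by the point counts; we take $C=\sqrt{c/4}$.

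The main obstacle is the representation-theoretic input: producing a single uniform bound $\dim\rho\ge c(\GG)\,q^{r}$ valid for \emph{all} nontrivial irreducibles of $\GG(\FF_q)$ -- not merely those that factor through the simple quotient -- rests on the classification-based results of Landazuri--Seitz and Seitz--Zalesskii, and one must separately dispose of the small characteristics and small field sizes at which $\GG(\FF_q)$ fails to be quasisimple or these bounds degenerate; this is precisely where the hypotheses $p\ge 3$ and ``all but finitely many primes'' are used. The remaining pieces -- the two applications of Lang--Weil, the Parseval-type identity, and the Hoffman bound -- are routine, and the point worth noting is that the target exponent $(r-m)/2$ is exactly what this chain of square-root-size estimates produces.
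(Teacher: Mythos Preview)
Your proof is correct and reaches the same core inequality $\chi \geq \sqrt{D\,|S|/|G|}$ that the paper obtains, with $D \gg q^{r}$ coming from Landazuri--Seitz and $|S|/|G| \gg q^{-m}$ from Lang--Weil. The only difference is in how that inequality is extracted from quasirandomness: the paper invokes Gowers' product-free mixing theorem (if $|A|\,|B|\,|C|>|G|^{3}/D$ then $AB\cap C\neq\emptyset$) as a black box, applied with $A$ a largest color class, $B=A^{-1}$, $C=S$; you instead run the spectral argument directly, bounding $\max_{\rho\neq\1}\|\widehat{\1_S}(\rho)\|_{\mathrm{op}}$ via the Parseval identity and then feeding this into the Hoffman ratio bound. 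These are two packagings of the same phenomenon --- Gowers' theorem is itself proved by precisely this spectral route --- so the arguments are essentially identical, and your version has the minor advantage of being self-contained. One small omission worth flagging: when you write ``by hypothesis $\tilde{\SS}$ is geometrically irreducible of dimension $d-m$, so $|\tilde{\SS}(\FF_q)|=q^{d-m}(1+O(q^{-1/2}))$'', you are tacitly assuming that geometric irreducibility survives reduction modulo $p$. The paper invokes Bertini--Noether for this step, which is where another finite set of bad primes gets thrown away; you should do the same.
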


To flesh out the conditions imposed on $\tilde{\SS}$, let us consider again the 
case $\GG=\SL_n$. Assume that $\tilde{\SS}$ is defined by an integer polynomial  equation $f(x_{ij})=0$. Then $\tilde{\SS}$ will satisfy the condition if the polynomial $f(x_{ij})$, viewed as a polynomial over $\CC[x_{ij}]$, is irreducible. For instance, the polynomial $f(x_{ij})=\det(I+(x_{ij}))$ can be shown to satisfy this property.  

Let us say a few words about the strategy of proof. In literature, there are various spectral bounds for the chromatic number of graphs. These bounds are quite effective when the character theory of the underlying group is easily understood (for instance, when it is abelian), but they become prohibitively hard for more complicated groups. 
One feature of  the proof of Theorem~\ref{main} is that it provides a decent lower bound for the chromatic number by simply exploiting the quasirandomness of the underlying groups. One can thus view this argument as a soft spectral bound. Following Gowers~\cite{Gowers}, a finite group $\rG$ is called $D$-quasirandom if every non-trivial complex representation of $\rG$ has dimension at least $D$. Frobenius first observed that $\SL_2(\FF_p)$ is $(p-1)/2$-quasirandom. This result has been extended to all finite groups of Lie type by Landazuri and Seitz~\cite{LS}; see also~\cite{BKMS} for a uniform proof for adjoint Chevalley groups. As will soon see, a mixing inequality, which is a consequence of quasi-randomness, plays a key role in our proof. Aside from this, we will also take advantage of Lang-Weil bound for estimating the size of certain sets over $\FF_q$.

\begin{remark}\label{Singular-graph}
It would be interesting to see how sharp the bound \eqref{main-bound} is. In particular, we do not know what happens when 
$ \rk \GG \le \codim \tilde{\SS}$. Here is an interesting test case. Fix $n \ge 2$ and $0\le \ell \le n$. Define the graph $\sing{n,\ell}{\FF_q}$ with the vertex set $\SL_n(\FF_q)$, in which two matrices $x,y$ form an edge if and only if
$\rk(x+y) \le n-\ell$. For $\ell=1$, we will recover $\sing{n}{\FF_q}$. Notice that all the connected components of the algebraic variety $\{x\in \SL_n(\CC): \rk(I+x) \le n-\ell\}$ have  codimension $\ell^2$ and so by  Theorem~\ref{main} we obtain
\begin{equation}\label{l=1}
\chi( \sing{n,\ell}{\FF_q}) \gg_{n,\ell} q^{ \frac{n-\ell^2-1}{2}}.
\end{equation}
This bound is non-trivial as long as $\ell \ll \sqrt{n}$, but we do not know what happens even for $\ell=n-1$.
\end{remark}

\begin{remark}
Groups with strong expansion properties tend to have large chromatic number. Prime examples of this kind are Ramanujan graphs, which were first constructed in \cite{LPS} by Lubotzky, Phillips, Sarnak, and, independently, by Margulis \cite{Margulis}. Note that these graphs have bounded degree. Exploiting the spectral gap and using Hoffman's bound, one can show that they also have large chromatic numbers. Although these graphs have the advantage of having a bounded degree independent of $q$, but at the same time, they are less flexible and more difficult to construct. For details we will refer the reader to \cite{Lubotzky}.
\end{remark}

Let us explain the reason for considering simple algebraic groups by showing an obvious obstruction to the growth of the chromatic numbers with $q$. 
\begin{example}[Abelian quotients as an obstruction] 
Let $\AA$ be either $\mathbb{G}_a$ or $\mathbb{G}_m$ and let $\SS$
 be a proper Zariski closed subset of $\AA$ defined over $\ZZ$, and assume that $\Id\not\in \SS$. For simplicity, assume that $\SS=\{ \pm a\}$ in the case of the additive group, and 
$\SS= \{ a^{\pm 1} \}$ in the case of the multiplicative group. It is easy to see that (except for a finite number of characteristics) for any $q$, each connected component of the graph  $\G_{\AA,\SS}(\FF_q)$ is isomorphic to a cycle and so $\chi(\G_{\AA,\SS}(\FF_q))\leq 3$.  More generally, let $\phi: \GG \to \AA$ be a nontrivial morphism of algebraic groups defined over $\ZZ$. Set $\SS_1=\phi^{-1}(\SS)$. We easily see that $\chi(\G_{\GG, \SS_1}(\FF_q))\leq \chi(\G_{\AA,\SS}(\FF_q))$  and hence its chromatic number is also bounded by $3$.  Note also that $\SS_1=\phi^{-1}(\SS)$ has codimension one in $\GG$. This example shows that Theorem~\ref{main} is not unconditionally true for all algebraic
groups. It would be interesting to know if other obstructions exist. 
\end{example}

As an immediate corollary of Theorem \ref{main} we have
\begin{corollary}\label{infinity}
Let $\GG$ and $\tilde{\SS}$ be as in Theorem \ref{main} and assume that $\rk\GG> \codim\tilde{\SS}$. Then for all but finitely many primes $p\geq 3$ 
\[ \chi(  \G_{\GG, \SS}( \overline{\FF}_p))= \infty, \]
where $\overline{\FF}_p$ is the algebraic closure of $\FF_p$ and $p\geq 3$.
\end{corollary}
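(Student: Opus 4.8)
The plan is to obtain the corollary as a direct consequence of Theorem~\ref{main}, by realising every finite graph $\G_{\GG,\SS}(\FF_q)$ with $q=p^f$ as an induced subgraph of $\G_{\GG,\SS}(\overline{\FF}_p)$. First I would unwind the hypothesis: $\rk\GG>\codim\tilde{\SS}$ says exactly that $r-m>0$ in the notation of Theorem~\ref{main}, so the exponent $\tfrac{r-m}{2}$ appearing in \eqref{main-bound} is strictly positive.

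Next I would fix a prime $p\geq 3$ lying outside the finite exceptional set of Theorem~\ref{main}. Since $\overline{\FF}_p=\bigcup_{f\geq 1}\FF_{p^f}$, every vertex of $\G_{\GG,\SS}(\FF_{p^f})$ is also a vertex of $\G_{\GG,\SS}(\overline{\FF}_p)$. Moreover, if $g_1,g_2\in\GG(\FF_{p^f})$ then $g_1^{-1}g_2\in\GG(\FF_{p^f})$, so $g_1^{-1}g_2\in\SS(\overline{\FF}_p)$ holds if and only if $g_1^{-1}g_2\in\SS(\overline{\FF}_p)\cap\GG(\FF_{p^f})=\SS(\FF_{p^f})$; hence adjacency in the large graph restricts precisely to adjacency in the small one, exhibiting $\G_{\GG,\SS}(\FF_{p^f})$ as an induced subgraph of $\G_{\GG,\SS}(\overline{\FF}_p)$.

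Finally, I would argue by contradiction: if $\G_{\GG,\SS}(\overline{\FF}_p)$ admitted a proper colouring with finitely many colours, say $c$ of them, then restricting that colouring to the vertex subset $\GG(\FF_{p^f})$ would give a proper colouring of $\G_{\GG,\SS}(\FF_{p^f})$, whence $c\geq C(d,r,m)\,p^{f(r-m)/2}$ by Theorem~\ref{main}. As $r-m>0$, the right-hand side tends to infinity with $f$, a contradiction; therefore no finite colouring exists and $\chi(\G_{\GG,\SS}(\overline{\FF}_p))=\infty$. There is no real obstacle here; the one point deserving a moment's attention is that the exceptional set of primes in Theorem~\ref{main} is genuinely independent of $q$, so that a single admissible prime $p$ supplies the lower bound simultaneously for all $q=p^f$. (Alternatively, one could deduce the statement from the De~Bruijn--Erd\H{o}s theorem on chromatic numbers of infinite graphs, but the direct restriction argument above makes this unnecessary.)
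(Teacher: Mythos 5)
Your argument is correct and is exactly the intended one: the paper gives no separate proof, calling the statement an immediate consequence of Theorem~\ref{main}, and the implicit reasoning is precisely your observation that each $\G_{\GG,\SS}(\FF_{p^f})$ sits inside $\G_{\GG,\SS}(\overline{\FF}_p)$ as an induced subgraph, so for a prime $p$ outside the exceptional set the lower bound $Cq^{(r-m)/2}$ with $r-m>0$ forces the chromatic number of the infinite graph to be infinite. Your attention to the two relevant points --- that the exceptional set in Theorem~\ref{main} consists of primes (not prime powers), and that adjacency restricts correctly because $\SS(\overline{\FF}_p)\cap\GG(\FF_{p^f})=\SS(\FF_{p^f})$ --- fills in the details correctly, and you are right that De~Bruijn--Erd\H{o}s is not needed for this direction.
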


\begin{remark}
When specialized to the case of non-singular graphs defined in~\eqref{Sing-def}, 
Corollary \ref{infinity} shows that the chromatic number of the $\sing{n}{ \overline{\FF}_q  }$
is infinity. This establishes a strengthening of a theorem of Tomon \cite{Tomon}, which establishes an analogous result for the extended singular graphs. At a more quantitative level, Theorem \ref{main}, when restricted to the non-singular graphs, establishes a lower bound of order $q^{ \frac{n-2}{2}}$ for the chromatic number of the singular graphs over $\FF_q$. As mentioned above, Tomon's result applies to the extended non-singular graphs (which are about $q$ times larger in size) and yields the bound $q^{ \left\lfloor \frac{n}{2} \right\rfloor}$. In this respect, the results cannot be directly compared. However, what makes Theorem~\ref{main} useful is its robustness, it that it can be applied to a large class of Cayley graphs. 

\end{remark}

It is noteworthy that the polynomial growth of the chromatic number is not due to the existence of a large clique.

\begin{proposition}\label{no-large-clique}
Suppose $\GG$ and $\tilde{\SS}$ are as in the statement of Theorem~\ref{main}. Then there exists a constant $C=C({\GG, \tilde{\SS}})$ such that for a sufficiently large prime $p$ we have $\omega(  \G_{\GG,\SS}(\FF_q))\leq C$, where  $\omega(\G)$ denotes the size of the largest clique in  graph $\G$ and $\FF_q$ is the finite field with $q=p^f$ elements. 
\end{proposition}

As pointed out above, our method does not give any non-trivial bound when $\rk \GG \le \codim \tilde{\SS}$. In some cases, one can invoke spectral bounds to give relatively sharp estimates. Our next theorem is a special case. Recall that the non-singular graph $\sing{2}{\FF_q}$ is a graph with the vertex set $\SL_2(\FF_q)$, in which two matrices $x,y$ form an edge if and only if $\det(x+y)=0$.
\begin{theorem}\label{main-SL_2} Let $\FF_q$ be a finite field of cardinality $q$, and characteristic $p\geq 3$. Then we have
\begin{equation}\label{chor-SL2}
q+1\leq \chi(\sing{2}{\FF_q}) \le 8(q+1). 
\end{equation}
\end{theorem}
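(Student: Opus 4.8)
The plan is to bracket $\chi(\sing{2}{\FF_q})$ from below by the spectral (Hoffman) bound and from above by a coloring read off the Borel subgroup of $\SL_2$.

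\emph{Lower bound.} First make the connection set explicit. On $\SL_2$ one has $\det(I+x)=1+\Tr x+\det x=2+\Tr x$, so $\tilde{\SS}(\FF_q)=\{x\in\SL_2(\FF_q):\Tr x=-2\}$; since $\Tr x^{-1}=\Tr x$ in $\SL_2$ this set is symmetric, it misses $\Id$ because $q$ is odd, and it consists of $-I$ together with the two non-trivial unipotent classes pre-multiplied by $-I$, so $|\SS(\FF_q)|=q^2$ and $\SS(\FF_q)$ is a union of conjugacy classes. Hence $\sing{2}{\FF_q}$ is a normal Cayley graph. Writing $\SS(\FF_q)=(-I)\cdot\mathcal U(\FF_q)$ with $\mathcal U$ the variety of unipotent elements, its adjacency operator on $L^2(\SL_2(\FF_q))$ acts on the $\rho$-isotypic component by the scalar $\omega_\rho(-I)\cdot c_\rho$, where $\omega_\rho(-I)=\pm1$ is the central sign and $c_\rho=\rho(1)^{-1}\sum_{u\in\mathcal U(\FF_q)}\chi_\rho(u)$. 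Feeding in Frobenius's character table of $\SL_2(\FF_q)$ one finds $c_\rho=q^2$ for the trivial representation, $c_\rho=1$ for the Steinberg representation (which vanishes off the identity on unipotents), and $c_\rho=\pm q$ for every other irreducible representation — namely $+q$ when the two unipotent classes carry character value $1$ and $-q$ when they carry value $-1$. Thus every eigenvalue of $\sing{2}{\FF_q}$ lies in $\{q^2,\,q,\,1,\,-q\}$, the value $q^2$ is simple, and $-q$ really occurs (from a principal-series character $\chi_\alpha$ with $\alpha(-1)=-1$ when $q\ge5$, and from a constituent of degree $(q+1)/2$ of the principal series at the quadratic character when $q=3$). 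Since the graph is $q^2$-regular with smallest eigenvalue $-q$, Hoffman's bound gives $\chi(\sing{2}{\FF_q})\ge 1-q^2/(-q)=q+1$.

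\emph{Upper bound.} Let $B\subseteq\SL_2$ be the Borel subgroup of upper-triangular matrices, of index $q+1$. For $b_1,b_2\in B$ the matrix $b_1+b_2$ is upper triangular, so $\det(b_1+b_2)=((b_1)_{11}+(b_2)_{11})((b_1)_{11}^{-1}+(b_2)_{11}^{-1})$, which vanishes precisely when $(b_1)_{11}=-(b_2)_{11}$. Hence, for any $g$, the subgraph of $\sing{2}{\FF_q}$ induced on the coset $gB$ is the disjoint union, over the $(q-1)/2$ pairs $\{a,-a\}\subseteq\FF_q^\times$, of complete bipartite graphs $K_{q,q}$, and in particular is bipartite. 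Choosing a proper $2$-coloring of each of the $q+1$ cosets and giving different cosets disjoint pairs of colors yields a proper coloring of the whole graph (a monochromatic edge would have to lie inside one coset, where the coloring is proper) with $2(q+1)\le 8(q+1)$ colors.

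The delicate part is the spectral step: it needs the complete list of irreducible characters of $\SL_2(\FF_q)$ together with their values at $-I$ and at the two unipotent classes, plus a separate look at $q=3$, where $\SL_2(\FF_q)$ is not perfect and the generic character table degenerates. A variant that avoids quoting the character table — and that is the one which survives the passage to $\SL_2$ over finite rings — expresses each $c_\rho$ as an exponential sum over $\SL_2(\FF_q)$ obtained by detecting $\Tr x=-2$ with additive characters; these are Kloosterman-type sums, exactly evaluable over a field (which is why the lower bound comes out cleanly as $q+1$) but only estimable through Weil's bound over a ring. The upper bound, by contrast, is soft — and in fact loose by a factor of $4$.
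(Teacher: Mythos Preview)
Your lower bound is essentially the paper's: both feed the full character table of $\SL_2(\FF_q)$ into the Babai--Diaconis--Shahshahani formula to get the spectrum $\{q^2,q,1,-q\}$ and then invoke Hoffman. Your device of writing $\SS(\FF_q)=(-I)\cdot\mathcal U(\FF_q)$ and splitting each eigenvalue as $\omega_\rho(-I)c_\rho$ is a cosmetic reorganisation of the same computation; the case check at $q=3$ is handled in both places by the half--principal--series characters of degree $(q+1)/2$.

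Your upper bound, on the other hand, is genuinely different from the paper's and in fact sharper. The paper colors a matrix by its $B$-coset together with two ``sign'' functions $\lambda(c),\lambda(d)$ of its second row, obtaining $8(q+1)$ colors; it then remarks that one can reach $2(q+1)$ only under the extra hypothesis that $-1\notin(\FF_q^\times)^2$, by passing to the index-$2(q+1)$ subgroup $B'=\{\,\mathrm{diag}(x,x^{-1})\cdot u: x\in(\FF_q^\times)^2\,\}$ whose cosets are independent. Your argument bypasses this: you observe directly that for $b_1,b_2\in B$ one has $\det(b_1+b_2)=\big((b_1)_{11}+(b_2)_{11}\big)^2/\big((b_1)_{11}(b_2)_{11}\big)$, so the induced graph on any coset $gB$ is a disjoint union of $K_{q,q}$'s indexed by the $(q-1)/2$ pairs $\{a,-a\}$, hence bipartite; two colors per coset then give $2(q+1)$ unconditionally. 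This is cleaner than the paper's construction and removes its quadratic-residue hypothesis for the bound $2(q+1)$; the paper's $8(q+1)$ is indeed, as you note, off by a factor of~$4$.
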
 
\begin{remark} As indicated above, Tomon~\cite[Theorems 2.1 and 2.5]{Tomon} has proven that for all prime powers $q$, the inequalities $q/4\leq \chi(\esing{2}{\FF_q} )\leq 4q(q+1)$ hold. The method of~\cite{Tomon}, however, does not seem to be applicable to the subgraph $\sing{2}{\FF_q}$, and will only yield a trivial bound. Moreover by modifying the proof of Theorem~\ref{main-SL_2} one can show that $\chi(\esing{2}{\FF_p})\ll p$ when $p\equiv 3\pmod{4}$. Therefore one might expect that the correct order of $\chi(\esing{2}{\FF_q})$ should be $q$ rather than $q^2$.
\end{remark}
\begin{remark} When $-1$ is a quadratic non-residue in the finite field $\FF_q$, the upper bound in~\eqref{chor-SL2} can be improved to $2(q+1)$. It is clear from Theorem~\ref{main-SL_2} that $\chi( \sing{2}{\FF_q})$ grows linearly with $q$. It would be 
interesting to obtain sharper lower and upper bounds. For instance, one can ask if 
$\lim_p \chi( \sing{2}{\FF_p})/p$ exists. Similarly,~\eqref{chor-SL2} may suggest that 
$\chi( \sing{n}{\FF_q})$ has the order $q^{n-1}$. 
\end{remark}
The non-singular graph $\sing{n}{\FF_q}$ can also be defined over finite rings. $\sing{n}{\ZZ/p^r\ZZ}$ is a graph with the vertex set $\SL_n(\ZZ/p^r\ZZ)$, in which two matrices $x,y$ form an edge if and only if $\det(x+y)=0$. We now discuss lower bound for $\chi(\sing{2}{\ZZ/p^r\ZZ})$. Although it is conceivable that the character theory of $\SL_2(\ZZ/p^r\ZZ)$ can be employed for this purpose, lengthy computations are bound to arise. We will circumvent the complications by finding certain simpler Cayley graphs inside $\sing{2}{\ZZ/p^r\ZZ}$. The spectral bounds for the chromatic number of these graphs turn out to be intimately related to certain Kloosterman sums, for which classical estimates exist. Using this strategy we can prove the following theorem.
\begin{theorem}\label{Klo-SL} Let $p\geq 5$ be a prime number. Then 
$$
\frac{\sqrt{p}}{4}\leq \chi(\sing{2}{\ZZ/p^r\ZZ}) \leq 8(p+1).
$$
\end{theorem}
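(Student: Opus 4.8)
The plan is to bracket $\chi(\sing{2}{\ZZ/p^r\ZZ})$ from two sides using a mix of a reduction/lifting argument for the upper bound and a spectral bound via embedded Cayley subgraphs for the lower bound. For the upper bound $\chi(\sing{2}{\ZZ/p^r\ZZ}) \le 8(p+1)$, I would first observe that the reduction map $\pi\colon \SL_2(\ZZ/p^r\ZZ) \to \SL_2(\FF_p)$ is a surjective group homomorphism, and that if $x,y \in \SL_2(\ZZ/p^r\ZZ)$ satisfy $\det(x+y)=0$ in $\ZZ/p^r\ZZ$, then a fortiori $\det(\pi(x)+\pi(y))=0$ in $\FF_p$. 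Hence $\pi$ is a graph homomorphism from $\sing{2}{\ZZ/p^r\ZZ}$ to $\sing{2}{\FF_p}$, so $\chi(\sing{2}{\ZZ/p^r\ZZ}) \le \chi(\sing{2}{\FF_p})$, and the latter is at most $8(p+1)$ by Theorem~\ref{main-SL_2}. This gives the upper bound with essentially no extra work; the only point to check is that $\sing{2}{\FF_p}$ here means the prime-field case of Theorem~\ref{main-SL_2} and the constant carries over verbatim.

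For the lower bound $\chi(\sing{2}{\ZZ/p^r\ZZ}) \ge \sqrt{p}/4$, the strategy the introduction advertises is to locate a small, tractable Cayley subgraph inside $\sing{2}{\ZZ/p^r\ZZ}$ whose spectral gap can be controlled by a Kloosterman sum. Concretely, I would look for an abelian (or at least highly structured) subgroup $H \le \SL_2(\ZZ/p^r\ZZ)$ together with a symmetric connection set $T \subseteq H$ lying in the edge set, so that $\Cay(H,T)$ is an induced subgraph of $\sing{2}{\ZZ/p^r\ZZ}$ and therefore $\chi(\sing{2}{\ZZ/p^r\ZZ}) \ge \chi(\Cay(H,T))$. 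A natural candidate is a unipotent-type or Cartan-type subgroup: for instance matrices of the form parametrized by $\ZZ/p^r\ZZ$, where the edge condition $\det(x+y)=0$ translates into a congruence condition on the parameters. The chromatic number of $\Cay(H,T)$ is then bounded below by the Hoffman/ratio bound $\chi \ge 1 - \lambda_{\max}/\lambda_{\min}$, where the eigenvalues are character sums over $H$; for an appropriate choice of $T$ these character sums are exactly Kloosterman sums $\Kl(a,b;p^r)$ (or classical Kloosterman sums mod $p$), and Weil's bound $|\Kl(a,b;p)| \le 2\sqrt{p}$ (together with its lift to prime-power moduli) forces $|\lambda| \le 2\sqrt{p}$ for the nontrivial eigenvalues while the top eigenvalue is of size $\asymp p$. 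Feeding these into the Hoffman bound yields a lower bound of order $\sqrt{p}$, and tracking constants gives the stated $\sqrt{p}/4$.

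The main obstacle is the middle step: exhibiting the right subgroup $H$ and connection set $T$ so that (i) $\Cay(H,T)$ genuinely sits inside $\sing{2}{\ZZ/p^r\ZZ}$ as a subgraph — i.e.\ $T$ really consists of pairs with $\det(x+y)=0$ — and (ii) the resulting character sums are honest Kloosterman sums to which Weil's estimate applies with the clean constant $2\sqrt{p}$. Getting the Kloosterman sum to appear typically requires a slightly clever parametrization (e.g.\ taking $H$ to be conjugate to the diagonal torus and writing the det-vanishing condition multiplicatively, or using a unipotent group and completing a sum), and one must be careful that the $p^r$-modulus versions of the sums still obey the square-root cancellation — this is standard (Salié/Kloosterman sums mod $p^r$ for $r\ge 2$ actually evaluate essentially explicitly and are even smaller), but it needs to be stated correctly. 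Once the Cayley subgraph and its spectrum are in hand, the Hoffman bound and the arithmetic to extract the constant $1/4$ are routine.
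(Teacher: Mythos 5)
Your upper bound is the paper's argument (reduce mod $p$ and quote Theorem~\ref{main-SL_2}), with one small check omitted: to conclude $\chi(\sing{2}{\ZZ/p^r\ZZ})\le\chi(\sing{2}{\FF_p})$ you must rule out that two adjacent vertices $x,y$ collapse to the same vertex under $\pi$ (otherwise $\pi$ is not a graph homomorphism and the pulled-back coloring could be improper). This is exactly what the paper's Lemma~\ref{dec} verifies: if $\pi(x)=\pi(y)$ and $\det(x+y)=0$, then $0=\det(2\pi(x))=4\det(\pi(x))=4$ in $\FF_p$, a contradiction for $p$ odd. With that line added, this half is complete.

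The lower bound, however, has a genuine gap, and it sits precisely at the step you flag as "the main obstacle'' and then leave unresolved: you never exhibit the structured subgraph, and the candidates you propose would not work. You look for an abelian \emph{subgroup} $H\le\SL_2(\ZZ/p^r\ZZ)$ with a symmetric connection set $T\subseteq H$ contained in the edge relation; but for a one-parameter unipotent group the sum of any two elements has determinant $4$, so the induced graph has no edges, and for the diagonal torus $\det\bigl(\mathrm{diag}(a,a^{-1})+\mathrm{diag}(b,b^{-1})\bigr)=0$ forces $b=-a$, so the induced Cayley graph is a perfect matching with chromatic number $2$; neither produces Kloosterman sums. The paper's key idea is different: the relevant subset is \emph{not} a subgroup. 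One takes $a_{x,y}=\begin{pmatrix}1-4xy&-2x\\ 2y&1\end{pmatrix}$ (a product of a lower and an upper unipotent) and computes $\det(a_{x_1,y_1}+a_{x_2,y_2})=4-4(x_2-x_1)(y_2-y_1)$, so adjacency depends only on the differences of the parameters; hence the induced subgraph on $\{a_{x,y}\}$ is isomorphic to the Cayley graph $\hgraph{\ZZ/p^r\ZZ}$ of the \emph{additive} group $(\ZZ/p^r\ZZ)^2$ with connection set $\{(x,y):xy=1\}$, whose eigenvalues are exactly the sums $\Kl(u,v,p^r)$. Your spectral endgame also needs tightening for prime-power modulus: the Estermann--Weil bound is $|\Kl(u,v,p^r)|\le 2\gcd(u,v,p^r)^{1/2}p^{r/2}$, so nontrivial eigenvalues are only bounded by $2p^{r-1/2}$ (not $2\sqrt p$), the trivial one equals the degree $p^r-p^{r-1}$, and one must check connectivity (and non-bipartiteness) of $\hgraph{\ZZ/p^r\ZZ}$ to know the extreme nontrivial eigenvalues are strictly below the degree before applying the bound $\chi\ge \ell/\max\{|\lambda_1|,|\lambda_{n-1}|\}$; the ratio $(p^r-p^{r-1})/(2p^{r-1/2})\ge\sqrt p/4$ then gives the constant. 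Without the explicit embedding, the proof as proposed does not go through.
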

This paper is organized as follows: in Section \ref{QR} we will give a proof of Theorem \ref{main}. Sections~\ref{Cay} and~\ref{sec-hyperbola} are devoted to the proof of Theorems~\ref{main-SL_2} and~\ref{Klo-SL}.
\section{Quasirandom groups}\label{QR}
Quasirandom groups were introduced by Gowers~\cite{Gowers} in order to answer a question of Babai and S{\'o}s on product-free sets in finite groups. We recall that a finite group $\rG$ is called $D$-quasirandom if every non-trivial complex representation of $\rG$ has dimension at least $D$. One of the main results in~\cite{Gowers} is the following mixing inequality: 
\begin{theorem}\label{Gow}
Let $\rG$ be a finite $D$-quasirandom group. If 
$A,B,C \subseteq \rG$ such that 
$$|A| |B| |C| > |\rG|^3/D,$$
then the set $AB \cap C$ is non-empty.
\end{theorem}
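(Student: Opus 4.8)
The plan is to prove this by non-abelian Fourier analysis on $G$, in the style of Gowers~\cite{Gowers}. Put $N=|G|$ and normalize convolution by $(\phi\ast\psi)(x)=\sum_{y\in G}\phi(y)\psi(y^{-1}x)$, so that $(\mathbf 1_A\ast\mathbf 1_B)(x)$ counts the number of representations $x=ab$ with $a\in A$ and $b\in B$. Since
\[
\langle \mathbf 1_A\ast\mathbf 1_B,\ \mathbf 1_C\rangle \;=\; \#\{(a,b)\in A\times B:\ ab\in C\},
\]
it suffices to show that this inner product is strictly positive.

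First I would extract the mean: write $\mathbf 1_A=\tfrac{|A|}{N}\mathbf 1+f_A$ and $\mathbf 1_B=\tfrac{|B|}{N}\mathbf 1+f_B$ with $\sum_g f_A(g)=\sum_g f_B(g)=0$. Using $\mathbf 1\ast\mathbf 1=N\mathbf 1$ together with $\mathbf 1\ast g=g\ast\mathbf 1=0$ for any $g$ of zero sum, one gets $\mathbf 1_A\ast\mathbf 1_B=\tfrac{|A||B|}{N}\mathbf 1+f_A\ast f_B$, hence
\[
\#\{(a,b)\in A\times B:\ ab\in C\} \;=\; \frac{|A||B||C|}{N}\;+\;\langle f_A\ast f_B,\ \mathbf 1_C\rangle .
\]
Thus the problem reduces to the error estimate $|\langle f_A\ast f_B,\mathbf 1_C\rangle|<\tfrac{|A||B||C|}{N}$.

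The substantive step is to bound $\|f_A\ast f_B\|_2$, and this is exactly where $D$-quasirandomness enters. Passing to the Fourier side $\widehat\phi(\rho)=\sum_{x}\phi(x)\rho(x)$, ranging over the irreducible unitary representations $\rho$ of $G$ of dimension $d_\rho$, the zero-sum condition forces $\widehat{f_A}$ and $\widehat{f_B}$ to vanish on the trivial representation; by Plancherel, $d_\rho\,\|\widehat{f_A}(\rho)\|_{\mathrm{HS}}^2\le N\|f_A\|_2^2$ for every $\rho$, so for every \emph{nontrivial} $\rho$ the hypothesis $d_\rho\ge D$ gives the operator-norm bound $\|\widehat{f_A}(\rho)\|_{\mathrm{op}}^2\le N\|f_A\|_2^2/D$. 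Combining this with the submultiplicativity $\|\widehat{f_A}(\rho)\widehat{f_B}(\rho)\|_{\mathrm{HS}}\le\|\widehat{f_A}(\rho)\|_{\mathrm{op}}\,\|\widehat{f_B}(\rho)\|_{\mathrm{HS}}$, the convolution identity $\widehat{f_A\ast f_B}(\rho)=\widehat{f_A}(\rho)\widehat{f_B}(\rho)$, Plancherel once more, and the crude bounds $\|f_A\|_2^2\le|A|$, $\|f_B\|_2^2\le|B|$, I expect to reach
\[
\|f_A\ast f_B\|_2 \;\le\; \sqrt{\frac{N|A||B|}{D}}.
\]
Cauchy--Schwarz then gives $|\langle f_A\ast f_B,\mathbf 1_C\rangle|\le\|f_A\ast f_B\|_2\,|C|^{1/2}\le\sqrt{N|A||B||C|/D}$, and the required inequality $\sqrt{N|A||B||C|/D}<|A||B||C|/N$ is merely a rearrangement of the hypothesis $|A||B||C|>N^3/D$. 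In fact this yields the sharper statement that the number of pairs $(a,b)\in A\times B$ with $ab\in C$ is at least $\tfrac{|A||B||C|}{N}-\sqrt{\tfrac{N|A||B||C|}{D}}$.

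I do not anticipate a genuine obstacle: the only care needed is in fixing consistent normalizations in Plancherel's identity and the convolution theorem, and in pinpointing that quasirandomness is used precisely to convert the Hilbert--Schmidt bound on $\widehat{f_A}(\rho)$ into an operator-norm bound on the nontrivial isotypic components. Everything else is routine linear algebra.
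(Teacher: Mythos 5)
Your proof is correct, and it is worth noting that the paper itself does not prove this statement at all: Theorem~\ref{Gow} is quoted from Gowers~\cite{Gowers}, with the remark that Gowers' proof and the later one by Babai--Nikolov--Pyber~\cite{Pyber} proceed by spectral analysis of (bipartite Cayley-type) graphs, and that a Fourier-analytic proof appears in~\cite{Breuillard}. What you wrote is precisely that Fourier-analytic argument, and the details check out with your normalizations: $\mathbf 1_A\ast\mathbf 1_B=\tfrac{|A||B|}{N}\mathbf 1+f_A\ast f_B$; Plancherel in the form $\|\phi\|_2^2=\tfrac1N\sum_\rho d_\rho\|\widehat\phi(\rho)\|_{\mathrm{HS}}^2$ gives $\|\widehat{f_A}(\rho)\|_{\mathrm{op}}^2\le\|\widehat{f_A}(\rho)\|_{\mathrm{HS}}^2\le N\|f_A\|_2^2/d_\rho\le N|A|/D$ on nontrivial $\rho$ (and $\widehat{f_A},\widehat{f_B}$ vanish on the trivial one); then $\|\widehat{f_A}(\rho)\widehat{f_B}(\rho)\|_{\mathrm{HS}}\le\|\widehat{f_A}(\rho)\|_{\mathrm{op}}\|\widehat{f_B}(\rho)\|_{\mathrm{HS}}$ plus one more application of Plancherel yields $\|f_A\ast f_B\|_2\le\sqrt{N|A||B|/D}$, and Cauchy--Schwarz with $\|\mathbf 1_C\|_2=|C|^{1/2}$ finishes, the strict hypothesis $|A||B||C|>N^3/D$ being exactly the rearrangement needed to make the main term dominate. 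The two routes are essentially equivalent in content: the spectral approach bounds the nontrivial singular values of the convolution operator $\phi\mapsto\phi\ast\mathbf 1_B$ using the fact that eigenspaces are $G$-invariant and hence have dimension at least $D$, whereas your version localizes the same information on each nontrivial irreducible via the operator-norm versus Hilbert--Schmidt comparison; your write-up has the small added benefit of recording the explicit count $\#\{(a,b)\in A\times B:\ ab\in C\}\ge\tfrac{|A||B||C|}{N}-\sqrt{N|A||B||C|/D}$, which is the quantitative mixing statement underlying Lemma~\ref{main-prop}.
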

Gowers' proof, as well as the proof given later by Babai, Nikolov and Pyber~\cite{Pyber}, is based on spectral analysis of graphs. A Fourier analytic proof of this theorem can also be found in~\cite{Breuillard}.
\begin{lemma}\label{main-prop} Let $\rG$ be a finite $D$-quasirandom group and let $S$ be a symmetric subset of $\rG$ with the associated Cayley graph $\Cay(\rG,S)$. Assume that $\1\not\in S$, then 
\begin{equation}
\chi(\Cay(\rG,S))\geq \sqrt{\frac{D|S|}{|\rG|}}\; .
\end{equation} 
\end{lemma}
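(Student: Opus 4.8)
The plan is to use the Gowers mixing inequality (Theorem~\ref{Gow}) applied to a proper coloring of $\Cay(G,S)$. Suppose we have a proper coloring of $\Cay(G,S)$ with $c=\chi(\Cay(G,S))$ colors. By pigeonhole, one color class $A$ has size at least $|G|/c$. A color class in a proper coloring of a Cayley graph is an independent set, so $AA^{-1}\cap S=\emptyset$, equivalently (since $S=S^{-1}$) the set $A$ satisfies: for all $a,a'\in A$, the element $a^{-1}a'\notin S$. The goal is to contradict this via Theorem~\ref{Gow}, provided $c$ is too small.

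First I would set $B=A^{-1}$ — note $|B|=|A|\ge |G|/c$ — and $C=S$. Then apply Theorem~\ref{Gow}: if $|A|\,|A^{-1}|\,|S|>|G|^3/D$, the set $A A^{-1}\cap S$ is non-empty. But $\Id\in A A^{-1}$ is the only ``trivial'' product we might worry about, and since $\Id\notin S$ that does not cause a problem; any element of $A A^{-1}\cap S$ is of the form $a^{-1}a'$ with $a\ne a'$ (if $a=a'$ we'd get $\Id$), giving an edge of $\Cay(G,S)$ inside the color class $A$, a contradiction. Hence we must have
\[
|A|^2|S|\le \frac{|G|^3}{D},
\]
and using $|A|\ge |G|/c$ this yields $\dfrac{|G|^2}{c^2}|S|\le \dfrac{|G|^3}{D}$, i.e. $c^2\ge \dfrac{D|S|}{|G|}$, which is exactly the claimed bound after taking square roots.

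The one point that needs a little care — and is the only real obstacle — is the bookkeeping around the identity element: $A A^{-1}$ always contains $\Id$, so one must make sure the element of $AA^{-1}\cap S$ produced by Theorem~\ref{Gow} is genuinely of the form $a^{-1}a'$ with $a\ne a'$ and thus corresponds to an actual edge. This is immediate from the hypothesis $\Id\notin S$: since $\Id\notin S$, any element lying in $AA^{-1}\cap S$ is nontrivial, hence arises as $a^{-1}a'$ for distinct $a,a'\in A$, and $(a,a')$ is an edge of $\Cay(G,S)$ lying in a single color class. A second minor point is verifying that a color class really is an independent set in a Cayley graph — this is just the definition of a proper coloring together with the definition of adjacency $g_1^{-1}g_2\in S$ — and that picking $B=A^{-1}$ is legitimate (it is, $A^{-1}$ is simply some subset of $G$ of the same cardinality). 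With these observations the proof is a direct application of Theorem~\ref{Gow} plus the pigeonhole principle.
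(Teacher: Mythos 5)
Your argument is correct and is essentially the paper's own proof: pick a largest color class $A$ (so $|A|\ge |G|/\chi$), observe that properness together with $\Id\notin S$ forces the relevant product set of $A$ and $A^{-1}$ to miss $S$, and apply Theorem~\ref{Gow} with $(A,A^{-1},S)$ to get $|A|^2|S|\le |G|^3/D$, which rearranges to the stated bound. The only small imprecision, which the paper's proof shares, is the left/right bookkeeping: with the adjacency convention $g_1^{-1}g_2\in S$, independence of $A$ gives $A^{-1}A\cap S=\emptyset$ rather than $AA^{-1}\cap S=\emptyset$, but this is harmless since one may simply apply Theorem~\ref{Gow} to the triple $(A^{-1},A,S)$ instead.
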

\begin{proof} 
let $\kappa$ be the chromatic number of $\Cay(\rG,S)$. Hence $\rG$ can be partitioned into $\kappa$ subsets $A_1, \dots, A_\kappa$ such that $xy^{-1}\not\in S$ when $x,y \in A_i$ for all $1 \le i \le \kappa$. There exists $1 \le i \le \kappa$ such that $A_i$ has size at least $|\rG|/\kappa$. Set $A= A_i$, $B=A_i^{-1}$, and $C=S$. From the definition of chromatic number, we have $AB\cap S=\emptyset$. Therefore, by Theorem~\ref{Gow}, we conclude
\[\frac{|\rG|^2|S|}{\kappa^2} \leq|A||B||S|=|A|^2 |S|\leq \frac{|\rG|^3}{D},\]
which completes the proof.
\end{proof}
Next we prove the following simple lemma which will be useful later. For a given group $\rG$, the minimal dimension of non-trivial irreducible representations of $\rG$ is denoted by $m(\rG)$ and $\pdm(\rG)$ denotes the minimal dimension of non-trivial irreducible projective representations of $\rG$.
\begin{lemma}\label{G/Z} Let $\rG$ be a perfect group. Then $m(\rG)\geq \pdm(\rG/\mathrm{Z}(\rG))$, where $\mathrm{Z}(\rG)$ is the center of $\rG$.  
\end{lemma}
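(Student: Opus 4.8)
The plan is to exploit the relationship between linear and projective representations via central extensions. Recall that a projective representation of a group $H$ is a homomorphism $H \to \PGL_d(\CC)$, and that such representations correspond (after passing to a suitable central extension) to genuine linear representations of a cover of $H$. Concretely, if $G$ is a perfect central extension of $H = G/Z(G)$, then every projective representation of $H$ lifts to a linear representation of $G$: given $\rho\colon H \to \PGL_d(\CC)$, we pull back along the quotient map $\pi\colon G \to H$ to get $\rho \circ \pi\colon G \to \PGL_d(\CC)$, and since $G$ is perfect, this lifts to an honest linear representation $\tilde\rho\colon G \to \GL_d(\CC)$ (a projective representation of a perfect group lifts to a linear representation of the group itself, because $H^2$ obstructions are killed by perfectness in the relevant sense — more precisely, the pullback to $G$ of the defining cocycle becomes a coboundary).

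First I would make precise the lifting statement: if $\bar\rho\colon G/Z(G) \to \PGL_d(\CC)$ is a non-trivial irreducible projective representation of dimension $d = \pdm(G/Z(G))$, then composing with $\pi\colon G \to G/Z(G)$ gives a non-trivial projective representation $G \to \PGL_d(\CC)$, and because $G$ is perfect this lifts to a genuine representation $\sigma\colon G \to \GL_d(\CC)$. Second, I would check that $\sigma$ is non-trivial: if $\sigma$ were trivial (or more generally had image in the scalars $\CC^\times$), then its composition with $\GL_d \to \PGL_d$ would be trivial, contradicting the non-triviality of $\bar\rho \circ \pi$, which is non-trivial because $\bar\rho$ is non-trivial and $\pi$ is surjective. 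Third, I would decompose $\sigma$ into irreducibles; at least one irreducible constituent $\sigma_0$ is non-trivial, and $\dim \sigma_0 \le \dim \sigma = d$. This gives a non-trivial irreducible linear representation of $G$ of dimension at most $d = \pdm(G/Z(G))$, hence $m(G) \le \pdm(G/Z(G))$ — wait, that is the wrong direction, so let me reconsider.

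The correct argument goes the other way: start with a non-trivial irreducible linear representation $\tau\colon G \to \GL_m(\CC)$ realizing $m(G) = m$. Since $G$ is perfect, $\det \tau \equiv 1$ (a homomorphism $G \to \CC^\times$ is trivial on the commutator subgroup, which is all of $G$), so $\tau$ lands in $\SL_m(\CC)$. By Schur's lemma the center $Z(G)$ acts by scalars under the irreducible $\tau$; composing $\tau$ with $\SL_m \to \PGL_m$ kills these scalars and hence factors through $G/Z(G)$, yielding a projective representation $\bar\tau\colon G/Z(G) \to \PGL_m(\CC)$. I then need $\bar\tau$ to be non-trivial and irreducible: irreducibility of $\bar\tau$ as a projective representation is equivalent to irreducibility of $\tau$ as a linear representation (the $\PGL$-action has no invariant subspace iff the $\GL$-action does), so $\bar\tau$ is irreducible. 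For non-triviality, if $\bar\tau$ were trivial then $\tau(G)$ would consist of scalar matrices, forcing $\tau$ to be one-dimensional (as it is irreducible), and then $\tau$ factors through $G/[G,G] = \{1\}$, contradicting non-triviality of $\tau$. Hence $\bar\tau$ is a non-trivial irreducible projective representation of $G/Z(G)$ of dimension $m$, so $\pdm(G/Z(G)) \le m = m(G)$, which is exactly the claim.

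The main obstacle — really the only subtle point — is the bookkeeping around projective representations: one must be careful that the notion of ``dimension'' of a projective representation is unambiguous and that irreducibility transfers correctly between the linear and projective settings, and one must use perfectness precisely at the step where $\det\tau \equiv 1$ and where a character $G \to \CC^\times$ is forced to be trivial. None of this requires the machinery of Schur multipliers or explicit cocycle computations; the argument is elementary once the definitions are pinned down. I would also note that perfectness is genuinely needed: without it, $G$ could have one-dimensional non-trivial representations that become trivial in $\PGL_1 = \{1\}$, breaking the comparison.
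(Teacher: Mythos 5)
Your final argument is correct and is essentially the paper's proof: take a non-trivial irreducible linear representation of minimal dimension, use Schur's lemma to see that $Z(G)$ acts by scalars so the induced projective representation factors through $G/Z(G)$, and use perfectness (no non-trivial one-dimensional representations) to rule out triviality of the resulting projective representation. The first two paragraphs about lifting projective representations are a detour in the wrong direction, which you correctly abandon; only the second half is needed.
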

\begin{proof} Let $\rho: \rG\to \GL_n(\CC)$ be a non-trivial irreducible representation. Since $\rG$ is perfect, and thus does not have any non-trivial one dimensional representation, we obtain a non-trivial projective representation $\bar{\rho}: \rG\to \PGL_n(\CC)$. Moreover from irreducibility of $\rho$ we conclude that $\mathrm{Z}(\rG)\subseteq \ker\bar{\rho}$ and so we obtain a non-trivial irreducible projective representation $\bar{\rho}:\rG/\mathrm{Z}(\rG)\to \PGL_n(\CC)$ and so $n\geq \pdm(\rG/(\mathrm{Z}(\rG)))$. 
\end{proof}
\begin{proof}[Proof of Theorem~\ref{main}] Let $d=\dim \GG$, $r=\rk \GG$, $m=\codim \tilde{\SS}$ and $q=p^f$. 
By applying Schwarz-Zippel bound~\cite[Proposition A.2]{Tao} we observe that $|\GG(\FF_q)|\leq c_1 q^d$ where $c_1$ is independent of $q$. Since $\tilde{\SS}$
is geometrically irreducible, then by Bertini-Noether~\cite[Corollary 10.4.3]{Jarden} we can conclude that, for all but finitely many primes $p$, the variety $\tilde{\SS}(\FF_q)$ is geometrically irreducible with the same dimension as $\tilde{\SS}$. Hence by Lang-Weil bound~\cite{LangWeil}, we obtain
$|\SS(\FF_q)|\geq c_2 q^{d-m}$,
where $c_2$ is independent of $q$ and $\SS(\FF_q)= \tilde{\SS}(\FF_q) \cup \tilde{\SS}(\FF_q)^{-1}$. Therefore
\[ \frac{|\SS(\FF_q)|}{|\GG(\FF_q)|}\geq \frac{c_3}{q^m}, \]
where $c_3$ is independent of $q$. Moreover, for all but finitely many primes $p$,  the finite group $\GG(\FF_q)$ is perfect and $\GG(\FF_q)/\mathrm{Z}(\GG(\FF_q))$ is simple~\cite[Theorem 24.17]{Malle}. By Landazuri and Seitz theorem~\cite{LS}, we have 
$\pdm(\GG(\FF_q)/\mathrm{Z}(\GG(\FF_q)))\geq c_4q^{\rk(\GG)},$
where $c_4$ is independent of $q$ and so by Lemma~\ref{G/Z} we can conclude that $\GG(\FF_q)$ is $O(q^{r})$-quasirandom. Since $\Id\not\in\tilde{\SS}$, then by applying Lemma~\ref{main-prop} we have 
$$\chi(\G_{\GG,\SS}(\FF_q))\gg q^{ \frac{r-m}{2}}.$$
\end{proof}

\begin{proof}[Proof of Proposition \ref{no-large-clique}]
The proof of this proposition 
is inspired by the proof of an analogous result in~\cite{AJF}. First, observe that if $\tilde{\SS}_1 \subseteq \tilde{\SS}_2$, then $\G_{\GG,\SS_1}(\FF_q)$ is a subgraph of $\G_{\GG,\SS_2}(\FF_q)$.
Hence, without loss of generality, we may assume that $\tilde{\SS}$ is defined by a single equation $P(x_{ij})=0$, where $P(x_{ij})$ is a polynomial with integer coefficients in $n^2$ variables $x_{ij}, 1 \le i, j \le n$. Since $\1\not\in\tilde{\SS}$ then we can assume that $P(\1) \neq 0$. Let $p>|P(\1)|$ be an arbitrary prime number. 
Then, in a field of characteristic $p$, we have $P(\Id) \neq 0$. Consider now a clique $\mathcal{C} $ in graph  $\G_{\GG,\SS}(\FF_q)$, and for each $A \in \mathcal{C}$, set $f_A(x_{ij})= P(A^{-1}X)\in \FF_q[x_{ij}]$, where $X=(x_{ij})$, and we have implicitly identified the matrix $A^{-1}X$ with its $n^2$ entries.  We claim that polynomials $\{ f_A(X)\}_{ A \in \mathcal{C}}$ are linearly independent over $\FF_q$. Assume
\begin{equation}\label{linear-dependence}
\sum_{i=1}^{| \mathcal{C}|}  \lambda_i f_{A_i}(x_{ij})=0, 
\end{equation}
where $ \lambda_i \in \FF_q$. Note that since $\mathcal{C}$ is a clique, we have $A_i^{-1}A_j \in \SS(\FF_q)$ for all $ i \neq j$. In view of this, after substituting $A_j$, $1 \le j \le | \mathcal{C}|$ for $X$ in \eqref{linear-dependence} we obtain 
\[ 0= \sum_{i=1}^{| \mathcal{C}|}   \lambda_i P(A_i^{-1}A_j)=  \lambda_j P(\Id). \]
Since $P(\Id) \neq 0$, we obtain $ \lambda_j=0$. Now if $P$ has degree $d$, then all polynomials $f_{A_i}$ belong to the vector space of polynomials of degree at most $d$ in $n^2$ variables $x_{ij}$ over $\FF_q$, whose dimension $D(n,d)$ over $\FF_q$ is fixed {\it independent} of $q$.  Linear independence of these polynomials implies that 
$| \mathcal{C}| \le D(n,d)$, which establishes the claim. 
\end{proof}

\section{Cayley graphs and their spectra}\label{Cay} We first recall some well-known facts from algebraic graph theory. The following spectral bound for the chromatic number of graphs is due to Hoffman and we refer the reader to~\cite[Theorem 7, page 265]{Bollobas} for its proof.
\begin{lemma}\label{Hoffman}  Let $\mathcal{G}$ be a non-empty graph with $n$ vertices.  Then 
\begin{equation}\label{Hoffman-eq}
\chi(\mathcal{G})\geq 1-\lambda_0/\lambda_{n-1}.
\end{equation}
where $\lambda_0\geq \lambda_1\geq\dots\geq \lambda_{n-1}$ is the spectrum of the adjacency matrix of $\mathcal{G}$.
\end{lemma}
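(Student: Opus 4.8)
The plan is to run Hoffman's spectral argument, which turns the fact that a proper coloring splits $\G$ into edgeless induced subgraphs into a constraint on the extreme eigenvalues of the adjacency matrix $A$ of $\G$, whose spectrum is $\lambda_0\geq\lambda_1\geq\cdots\geq\lambda_{n-1}$. Since $\G$ has an edge, $\Tr(A)=0$ while $\Tr(A^2)=2|E(\G)|>0$, so the eigenvalues sum to zero without all vanishing; hence $\lambda_0>0>\lambda_{n-1}$, and in particular dividing by $\lambda_{n-1}$ is legitimate. I would fix an optimal proper coloring with color classes $V_1,\dots,V_\kappa$, where $\kappa=\chi(\G)$, each $V_i$ an independent set and $V(\G)=\bigsqcup_i V_i$, and a unit eigenvector $x$ of $A$ with $Ax=\lambda_0 x$.

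Next I would decompose $x$ along the coloring: let $x^{(i)}$ agree with $x$ on $V_i$ and vanish off $V_i$, so that $x=\sum_{i=1}^{\kappa}x^{(i)}$, the $x^{(i)}$ have pairwise disjoint supports (hence are pairwise orthogonal), and $\sum_i\|x^{(i)}\|^2=\|x\|^2=1$. The point that makes everything work is that the ``diagonal blocks'' of $A$ vanish: since $V_i$ spans no edge, $\langle x^{(i)},Ax^{(i)}\rangle=0$ for every $i$. Expanding $\langle x,Ax\rangle$ over the blocks, discarding these diagonal terms, and replacing $A$ by $A-\lambda_{n-1}I$ in the remaining off-diagonal terms at no cost — the scalar part contributes $\lambda_{n-1}\sum_{i\neq j}\langle x^{(i)},x^{(j)}\rangle=0$ by orthogonality — one gets
\[
\lambda_0=\langle x,Ax\rangle=\sum_{i\neq j}\langle x^{(i)},Ax^{(j)}\rangle=\sum_{i\neq j}\langle x^{(i)},(A-\lambda_{n-1}I)x^{(j)}\rangle.
\]

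Now I would invoke that $M:=A-\lambda_{n-1}I$ is positive semidefinite, $\lambda_{n-1}$ being the least eigenvalue of $A$. Applying Cauchy--Schwarz to the semi-inner-product $(u,v)\mapsto\langle u,Mv\rangle$, together with $\langle x^{(i)},Mx^{(i)}\rangle=-\lambda_{n-1}\|x^{(i)}\|^2$, yields $|\langle x^{(i)},Mx^{(j)}\rangle|\leq(-\lambda_{n-1})\,\|x^{(i)}\|\,\|x^{(j)}\|$ for $i\neq j$. Setting $s_i=\|x^{(i)}\|$, summing over the $\kappa(\kappa-1)$ ordered pairs $i\neq j$, and using $\bigl(\sum_i s_i\bigr)^2\leq\kappa\sum_i s_i^2$, I obtain
\[
\lambda_0\leq(-\lambda_{n-1})\left(\left(\sum_i s_i\right)^2-\sum_i s_i^2\right)\leq(-\lambda_{n-1})(\kappa-1)\sum_i s_i^2=(-\lambda_{n-1})(\kappa-1),
\]
and dividing by $-\lambda_{n-1}>0$ gives $\chi(\G)=\kappa\geq 1-\lambda_0/\lambda_{n-1}$.

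I do not expect a real obstacle, but one subtlety deserves comment. The more familiar route — form the quotient matrix $B$ of the color-class partition, note $\Tr(B)=0$ since the diagonal blocks of $A$ vanish, and apply Cauchy interlacing — gives only $\mu_1\leq\lambda_0$ and $\mu_\kappa\geq\lambda_{n-1}$ for its extreme eigenvalues, hence merely $\kappa\geq 1+\mu_1/(-\mu_\kappa)$; this matches the stated bound precisely when $\mu_1=\lambda_0$, which in general forces $\G$ to be regular (then $B\mathbf{1}=\lambda_0\mathbf{1}$, so $\lambda_0$ is an eigenvalue of $B$, necessarily its largest, and $0=\Tr(B)\geq\lambda_0+(\kappa-1)\lambda_{n-1}$ finishes). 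The regular case is the one relevant to this paper, since Cayley graphs are vertex-transitive, but the positive-semidefinite computation above needs no regularity and proves the lemma as stated; its only genuinely necessary hypothesis is that $\G$ be non-empty, used once to guarantee $\lambda_{n-1}<0$, without which the final division would be meaningless.
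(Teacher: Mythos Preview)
The paper does not actually prove this lemma; it merely states the result and refers the reader to \cite[Theorem 7, page 265]{Bollobas}. Your proposal, by contrast, supplies a complete and correct self-contained proof of Hoffman's bound: the decomposition of the top eigenvector along the color classes, the observation that the diagonal blocks $\langle x^{(i)},Ax^{(i)}\rangle$ vanish on independent sets, the shift to the positive-semidefinite matrix $M=A-\lambda_{n-1}I$, and the Cauchy--Schwarz estimate in the $M$-semi-inner-product are all sound, and the final arithmetic with $\sum_{i\neq j}s_is_j\leq(\kappa-1)\sum_i s_i^2$ is correct. Your side remark about the quotient-matrix/interlacing route requiring regularity to recover $\lambda_0$ exactly is also accurate, and since the paper only ever applies the lemma to Cayley graphs (which are regular), either argument would suffice for its purposes; your version simply proves the lemma in the generality in which it is stated.
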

In some cases it will be more convenient to use the following lower bound~\cite[1.5.4. Corollary]{Sarnak}.
\begin{lemma}\label{Sar} Let $\mathcal{G}$ be a finite, connected, $\ell$-regular graph on $n$ vertices, without loop. Then
$$
\chi(\mathcal{G})\geq \frac{\ell}{\max\left\{|\lambda_1|,|\lambda_{n-1}|\right\}}.
$$
\end{lemma}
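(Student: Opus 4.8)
The final statement is Lemma~\ref{Sar} (Sarnak's spectral lower bound for the chromatic number of a regular graph), which is quoted from the literature, so I will reconstruct a proof rather than cite it. The starting point is Hoffman's bound (Lemma~\ref{Hoffman}), which we are allowed to assume: for a non-empty graph $\G$ on $n$ vertices with adjacency spectrum $\lambda_0\geq\lambda_1\geq\cdots\geq\lambda_{n-1}$ one has $\chi(\G)\geq 1-\lambda_0/\lambda_{n-1}$. The idea is simply to specialize this to a connected $\ell$-regular graph and then bound the right-hand side from below in terms of $\max\{|\lambda_1|,|\lambda_{n-1}|\}$.

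First I would record the standard spectral facts for a connected $\ell$-regular graph with no loops: the largest eigenvalue is $\lambda_0=\ell$, it has multiplicity one (by connectedness, via Perron--Frobenius applied to the adjacency matrix), and every eigenvalue lies in $[-\ell,\ell]$. In particular $\lambda_{n-1}<0$ unless the graph has no edges (if $\lambda_{n-1}\geq 0$ then all eigenvalues are $\geq 0$, forcing $\Tr A=\sum_i\lambda_i\geq\ell>0$, contradicting the fact that the adjacency matrix of a loopless graph has zero diagonal); so the graph being non-empty and $\ell$-regular with $\ell\geq 1$ guarantees $\lambda_{n-1}<0$ and Hoffman's bound is meaningful. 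Also, since $\lambda_0=\ell$ has multiplicity one, the second-largest eigenvalue satisfies $\lambda_1<\ell$, and trivially $\lambda_{n-1}\geq -\ell$.

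Now plug into Hoffman: $\chi(\G)\geq 1-\ell/\lambda_{n-1}=1+\ell/|\lambda_{n-1}|$. Set $\mu:=\max\{|\lambda_1|,|\lambda_{n-1}|\}$; note $\mu>0$ since $\lambda_{n-1}<0$. Because $|\lambda_{n-1}|\leq\mu$ we get
\[
\chi(\G)\;\geq\;1+\frac{\ell}{|\lambda_{n-1}|}\;\geq\;1+\frac{\ell}{\mu}\;\geq\;\frac{\ell}{\mu},
\]
which is exactly the claimed inequality $\chi(\mathcal G)\geq \ell/\max\{|\lambda_1|,|\lambda_{n-1}|\}$. (One in fact gets the slightly stronger bound $1+\ell/\mu$, but the stated form suffices for the applications.)

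**Main obstacle.** There is no serious obstacle here; the only point requiring care is the case analysis needed to ensure $\lambda_{n-1}<0$ so that Hoffman's bound is both applicable and gives a positive quantity — this uses the hypotheses that $\G$ is $\ell$-regular ($\ell\geq 1$), loopless, and non-empty, together with the trace argument above. Connectedness is used only to pin down $\lambda_0=\ell$ with multiplicity one, which in turn gives $\lambda_1<\ell$ and hence the clean appearance of $\max\{|\lambda_1|,|\lambda_{n-1}|\}$ rather than $\max\{|\lambda_1|,\dots\}$ over all non-Perron eigenvalues; strictly speaking one could drop connectedness at the cost of replacing $\lambda_1$ by $\max_{i\geq 1}|\lambda_i|$, but keeping the hypothesis as stated matches the reference.
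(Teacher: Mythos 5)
Your derivation is correct. Note that the paper itself offers no proof of this lemma: it is quoted directly from Davidoff--Sarnak--Valette (cited as~\cite{Sarnak}, 1.5.4 Corollary), so the only comparison available is with that source, which proves the statement differently, via an upper bound on the independence number of a connected $\ell$-regular graph (any independent set $B$ satisfies $|B|\leq \frac{\mu}{\ell+\mu}n$ with $\mu=\max\{|\lambda_1|,|\lambda_{n-1}|\}$, obtained by expanding the indicator vector of $B$ along the Perron eigenvector), followed by $\chi\geq n/\alpha$. Your route instead piggybacks on Hoffman's bound (Lemma~\ref{Hoffman}), which the paper already states, and the bookkeeping is sound: connectedness and $\ell$-regularity give $\lambda_0=\ell$, the zero trace of a loopless adjacency matrix forces $\lambda_{n-1}<0$, and then $\chi\geq 1+\ell/|\lambda_{n-1}|\geq 1+\ell/\mu\geq \ell/\mu$, which is even slightly stronger than the stated inequality. (Two small remarks: connectedness is not really what you need for the clean form of the bound --- your chain only uses $|\lambda_{n-1}|\leq\mu$, and $\lambda_0=\ell$ holds for any $\ell$-regular graph --- and for $n\geq 2$ connectedness already forces $\ell\geq 1$, so the non-emptiness discussion is automatic.) Both approaches buy the same thing here; yours has the advantage of reusing a lemma already quoted in the paper rather than importing the independence-number argument.
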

Now let $\rG$ be a group and let $S\subseteq \rG$ be a symmetric set. Note that the Cayley graph $\Cay(\rG,S)$ is $|S|$-regular and is connected if and only if $S$ generates $\rG$.
The following theorem due to Babai~\cite{Babai} and Diaconis-Shahshahani~\cite{Shahshahani}, describes the spectrum of the adjacency matrix of $\Cay(\rG,S)$ using the character theory of $\rG$.
\begin{theorem}\label{Shah}
Let $\rG$ be a finite group and $S$ a symmetric subset which is stable
under conjugation. Let $A$ be the adjacency matrix of the graph $\Cay(\rG,S)$. Then the eigenvalues of $A$ are given by
$$
\lambda_\rho=\frac{1}{\dim(\rho)}\sum_{s\in S}\chi_\rho(s),
$$
as $\chi_\rho$, the character of the representation $\rho$, ranges over all irreducible characters of $\rG$. Moreover, the multiplicity of $\lambda_\rho$ is $\dim(\rho)^2$.
\end{theorem}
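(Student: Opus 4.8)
The plan is to realise the adjacency matrix $A$ of $\Cay(G,S)$ as an element of the group algebra $\CC[G]$ acting on the regular representation, and then to diagonalise it via the Wedderburn decomposition together with Schur's lemma. Identify the vertex set with $G$ and the space of $\CC$-valued functions on $G$ with $\CC[G]$, $v\leftrightarrow\sum_{g}v_g\,g$. Since $g_1,g_2$ are adjacent precisely when $g_1^{-1}g_2\in S$ and $S=S^{-1}$, one has $(A)_{g,h}=\1_S(g^{-1}h)$, so $(Av)_g=\sum_{s\in S}v_{gs}$; under the identification above this is exactly right multiplication by the element $z:=\sum_{s\in S}s\in\CC[G]$. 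In particular $A$ lies in the commutant of the left regular representation. Because $S$ is stable under conjugation, $z$ is moreover a central element of $\CC[G]$, so right multiplication by $z$ coincides with left multiplication by $z$.

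Next I would invoke the decomposition of the left regular representation, $\CC[G]\cong\bigoplus_{\rho}\rho^{\oplus\dim\rho}$, the sum being over a complete set of irreducible representations $\rho$ of $G$, and write $V_\rho\subseteq\CC[G]$ for the $\rho$-isotypic component, which has dimension $(\dim\rho)^2$ and is preserved by $A$. Since $z$ is central, $A$ acts on $V_\rho$, regarded inside the regular representation, through the scalar by which $z$ acts on $\rho$ itself; by Schur's lemma $\rho(z)=\sum_{s\in S}\rho(s)$ is a scalar matrix $\lambda_\rho I_{\dim\rho}$, and taking traces gives $\lambda_\rho\dim\rho=\sum_{s\in S}\chi_\rho(s)$, that is, $\lambda_\rho=\frac{1}{\dim\rho}\sum_{s\in S}\chi_\rho(s)$. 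Hence $A$ has eigenvalue $\lambda_\rho$ on all of $V_\rho$, so the multiplicity of $\lambda_\rho$ is at least $\dim V_\rho=(\dim\rho)^2$; since the subspaces $V_\rho$ exhaust $\CC[G]$, this already accounts for the whole spectrum, and if several $\rho$ contribute the same numerical value the multiplicities $(\dim\rho)^2$ simply add.

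Finally, because $A$ is a real symmetric matrix (as $S$ is symmetric) the $\lambda_\rho$ are automatically real, consistently with $\overline{\lambda_\rho}=\frac{1}{\dim\rho}\sum_{s\in S}\overline{\chi_\rho(s)}=\frac{1}{\dim\rho}\sum_{s\in S}\chi_\rho(s^{-1})=\lambda_\rho$. This is the classical argument of Babai and of Diaconis--Shahshahani, and the only point demanding care --- rather than a genuine obstacle --- is the bookkeeping: keeping left and right translations straight so that $A$ really does act block-diagonally on the isotypic decomposition and as a single scalar on each block, and phrasing the multiplicity statement so as to account for distinct irreducibles sharing an eigenvalue. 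Everything else is the standard Wedderburn/Schur machinery for finite groups.
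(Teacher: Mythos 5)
Your argument is correct: identifying $A$ with multiplication by the central element $\sum_{s\in S}s$ of $\CC[G]$ and applying Schur's lemma on each $(\dim\rho)^2$-dimensional isotypic block is exactly the classical proof. The paper gives no proof of this statement, citing Babai and Diaconis--Shahshahani, and your write-up is essentially the argument of those references, so there is nothing to reconcile.
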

We now turn to the graph $\sing{2}{\FF_q}$, $q=p^f$ for an odd prime $p$. We remark that the method of the previous section is not applicable to $\sing{2}{\FF_q}$. Instead we use representation theory to obtain a lower bound for the chromatic number of this graph. Let $\E_{2,q}$ denote the set of matrices in $\SL_2(\FF_q)$ which have $-1$ as an eigenvalue. Since $\E_{2,q}$ is a union of conjugacy classes, we can use the Jordan canonical form to give a simple description of the set $\E_{2,q}$. Let $\nu$ be a generator of the cyclic group $\FF_q^*$. Any matrix in $\SL_2(\FF_q)$ with an eigenvalue $-1$ is either $-I$, where $I$ is the identity matrix, or is conjugate in $\SL_2(\FF_q)$ to one of the following matrices
$$
T_1:=\begin{pmatrix}
-1 & 0\\
-1 & -1
\end{pmatrix}, \qquad T_2:=\begin{pmatrix}
-1 & 0\\
-\nu & -1
\end{pmatrix}.
$$ 
Hence $\E_{2,q}=\{-I, (T_1), (T_2)\}$, where $(T_1)$ and $(T_2)$ denote the conjugacy classes of $T_1$ and $T_2$. It is easy to see that each of these conjugacy classes has $(q^2-1)/2$  elements and so $|\E_{2,q}|=q^2$. 
We recall that $\SL_2(\FF_q)$ is generated by unipotent matrices. Notice that the subgroup generated by $\E_{2,q}$ contains  
\begin{equation}\label{anti-uni}
\begin{pmatrix}
-1 & 0\\
a & -1
\end{pmatrix}^2= \begin{pmatrix}
1 & 0\\
2a & 1
\end{pmatrix}   ,\qquad \begin{pmatrix}
-1 & b\\
0 & -1
\end{pmatrix}^2= \begin{pmatrix}
1 & 2b\\
0 & 1
\end{pmatrix}, \qquad a,b\in\FF_q.
\end{equation}
Hence $\E_{2,q}$ generate $\SL_2(\FF_q)$ from which it follows that the graph $\sing{2}{\FF_q}=\Cay(\SL_2(\FF_q),\E_{2,q})$ is a $q^2$-regular connected graph. Let $A$ be the adjacency matrix of $\sing{2}{\FF_q}$ with eigenvalues 
$$
\lambda_0\geq \lambda_1\geq\dots\geq \lambda_{n-1},\qquad n=|\SL_2(\FF_q)|.
$$
Since $\sing{2}{\FF_q}$ is a $q^2$-regular connected graph then we have
\begin{equation}\label{lambda_{n-1}}
q^2= \lambda_0> \lambda_1\geq 0>\lambda_{n-1}.
\end{equation}
 We now invoke Lemma~\ref{Hoffman} to find a lower bound for the chromatic number of $\sing{2}{\FF_q}$. In order to apply this lemma we need to estimate the size of the eigenvalues of the adjacency matrix, which by Theorem~\ref{Shah} are given by
\begin{equation}\label{shah-calc}
\lambda_{\rho}=\frac{1}{\dim(\rho)}\sum_{s\in \E_{2,q}}\chi_\rho(s)=\frac{1}{\dim(\rho)}\left(\chi_\rho(-I)+\frac{q^2-1}{2}\chi_\rho(T_1)+\frac{q^2-1}{2}\chi_\rho(T_2)\right),
\end{equation}
where $\chi_\rho$, the character of the representation $\rho$, ranges over all irreducible characters of $\SL_2(\FF_q)$.
Representations of $\SL_2(\FF_q)$ have been studied by Frobenius and Schur. For more details we refer the reader to~\cite[Section 38]{Dorn}. 
From~\eqref{lambda_{n-1}} and~\eqref{shah-calc}, to evaluate $\lambda_{n-1}$ we only need to know the values of non-trivial characters at $-I, T_1$ and $T_2$.

 Denote $\varepsilon=(-1)^{(q-1)/2}$. Then for $1\leq i\leq (q-3)/2$ and $1\leq j\leq (q-1)/2$ we have the following table (see~\cite[Theorem 38.1]{Dorn}): 
\[
\bigskip
\begin{array}{|l|c|c|c|c|}\hline
\Rep & \dim & -I & T_1 &  T_2 \Tstrut\Bstrut\\ \hline
\psi    & q & q  & 0 & 0 \Tstrut\Bstrut\\
\chi_i    & q+1  & (-1)^i(q+1)  & (-1)^i  & (-1)^i \Tstrut\Bstrut\\
\theta_j    &q-1  &(-1)^j(q-1)  & (-1)^{j+1}  & (-1)^{j+1} \Tstrut\Bstrut\\
\xi_1    &\frac{1}{2}(q+1)  &\frac{1}{2}\varepsilon(q+1)  &\frac{1}{2}\varepsilon (1+\sqrt{\varepsilon q})  & \frac{1}{2}\varepsilon (1-\sqrt{\varepsilon q})\Tstrut\Bstrut \\ 
\xi_2    &\frac{1}{2}(q+1)  &\frac{1}{2}\varepsilon(q+1)  &\frac{1}{2}\varepsilon (1-\sqrt{\varepsilon q})  & \frac{1}{2}\varepsilon (1+\sqrt{\varepsilon q}) \Tstrut\Bstrut\\ 
\eta_1    &\frac{1}{2}(q-1)  &-\frac{1}{2}\varepsilon(q-1)  & \frac{1}{2}\varepsilon (1-\sqrt{\varepsilon q})  & \frac{1}{2}\varepsilon (1+\sqrt{\varepsilon q}) \Tstrut\Bstrut\\ 
\eta_2    &\frac{1}{2}(q-1)  &-\frac{1}{2}\varepsilon(q-1)  & \frac{1}{2}\varepsilon (1+\sqrt{\varepsilon q})  & \frac{1}{2}\varepsilon (1-\sqrt{\varepsilon q}) \Tstrut\Bstrut\\ \hline
\end{array}
\]
From~\eqref{shah-calc}, the above table and a simple calculation we obtain the following equalities:
\begin{equation}\label{simple-cal}
\lambda_\psi=1,\qquad \lambda_{\chi_i}=(-1)^{i}q,\qquad \lambda_{\theta_j}=(-1)^{j+1}q,\qquad \lambda_{\xi_1}=\lambda_{\xi_2}=\lambda_{\eta_1}=\lambda_{\eta_2}=\varepsilon q.
\end{equation}
With these preliminaries, we are now ready to prove Theorem~\ref{main-SL_2}.
\begin{proof}[Proof of Theorem~\ref{main-SL_2}] Using~\eqref{simple-cal} along with Theorem~\ref{Shah} we have: 
$$
\frac{\lambda_0}{-\lambda_{n-1}}=\frac{\lambda_0}{|\lambda_{n-1}|}=q.
$$
By combining this with Lemma~\ref{Hoffman} we obtain the lower bound $q+1\leq \chi(\sing{2}{\FF_q})$.
In order to establish the upper bound, we will exhibit a 
proper coloring of $\chi(\sing{2}{\FF_q})$ with $8(q+1)$ 
colors. 
Let $  \Sigma=\{0,1,-1\}$ and $ \lambda: \FF_q  \to \Sigma$ be a function satisfying $ \lambda(0)=0$ and $ \lambda(x) \neq  \lambda(-x)$ for all $x \in \FF_q \setminus \{ 0 \}$. Denote by $B$ the subgroup of upper-triangular matrices in $\rG:=\SL_2(\FF_q)$, and let $ \pi: \rG \to \rG/B$ be the canonical map. Define the coloring map $\Theta$ by
$$\Theta: \rG\to \rG/B \times \Sigma \times  \Sigma,\qquad  
X= \begin{pmatrix}
a  & b    \\
 c & d \\
\end{pmatrix} \mapsto (\pi(X), \lambda(c), \lambda(d) ).
$$
We claim that $\Theta$ provides a proper coloring for $\sing{2}{\FF_q}$. Let $X, Y \in \SL_2(\FF_q)$ be such that $\Theta(X)=\Theta(Y)$ and $-1$ is an eigenvalue of $X^{-1}Y$. 
From $\pi(X)=\pi(Y)$ we conclude that $X^{-1}Y \in B$. It is easy to see that every element in $B$ with an eigenvalue equal to $-1$ is of the form
$ \begin{pmatrix}
-1  & t    \\
 0 & -1 \\
\end{pmatrix} $
for some $t \in \FF_q$. Write $X= \begin{pmatrix}
a  & b    \\
 c & d \\
\end{pmatrix}$. Then
$$ Y= \begin{pmatrix}
a  & b    \\
 c & d \\
\end{pmatrix} \begin{pmatrix}
-1  & t    \\
 0 & -1 \\
\end{pmatrix}  = \begin{pmatrix}
-a  & at-b    \\
 -c & ct -d \\
\end{pmatrix}.$$

Since $\Theta(X)=\Theta(Y)$, we have $ \lambda(c)= \lambda(-c)$, which implies that 
$c=0$. Now, we must also have $ \lambda(d)= \lambda(ct-d)= \lambda(-d)$, which 
implies that $d=0$, which is a contradiction. So $\Theta$ provides a
proper coloring and then (noting that $ \lambda(c)=\lambda(d)=0$ cannot occur), we obtain
$$\chi(\sing{2}{\FF_q}) \le 8|\rG/B|= 8(q+1).$$
\end{proof} 
When $-1$ is a quadratic non-residue in $\FF_q$ one can improve the upper bound to $2(q+1)$. Indeed let $H=(\FF_q^{\ast})^2$ be the quadratic residue subgroup which does not contain $-1$. Now consider the following subgroup of $\rG:=\SL_2(\FF_q)$:
\[ B'= \left\{ \begin{pmatrix}
x  & y    \\
 0 & x^{-1} \\
\end{pmatrix}: x \in H, y \in \FF_q \right\}. \]
Let $ \pi: \rG \to \rG/B'$ to be canonical quotient map. We claim that $ \pi$ provides a proper coloring. If $ \pi(X)= \pi(Y)$,
then $X^{-1}Y \in B'$, which implies that the eigenvalues of $X^{-1}Y$ are distinct from $-1$. Hence 
$\chi( \Gamma_2( \FF_q)) \le |\rG/B'|= 2(q+1)$.
In a similar fashion, one obtains the upper bound $4(q+1)$ if 
$-1$ is not a fourth power in $\FF_q^{\ast}$. 
\section{Non-singular graphs over rings}\label{sec-hyperbola}
 This section is devoted to the proof of Theorem \ref{Klo-SL}. 
 Let $R$ be a commutative ring with $1$. The hyperbola graph over $R$, denoted by $\hgraph{R}$, is  defined by 
\[ \hgraph{R}=\Cay(R^2, \mathcal{S}), \qquad \mathcal{S}=\{(x,y)\in R\times R: xy=1\}.\]
\begin{proposition}\label{prop:embed}
Let $R$ be a commutative ring in which $2$ is invertible. Then there exists a subset 
$A \subseteq \SL_2(R)$ of vertices of $\sing{2}{R}$ such that the induced subgraph on $A$ is isomorphic to $\hgraph{R}$.  
\end{proposition}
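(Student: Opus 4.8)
The plan is to exhibit an explicit injection $\phi\colon R^2\to\SL_2(R)$ whose image $A$ induces in $\sing 2{R}$ a subgraph isomorphic to $\hgraph R$. Since adjacency in $\sing 2{R}$ is governed by the vanishing of $\det(x+y)$, I would look for a two-parameter family of determinant-one matrices in which two of the entries play the role of the coordinates $(s,t)\in R^2$ and the other two are arranged so that the determinant of a sum of two members of the family depends only on the differences of the coordinate pairs. The family $\left(\begin{smallmatrix} s & 1\\ st-1 & t\end{smallmatrix}\right)$ already has this property — one computes that the determinant of the sum of its members at $(s,t)$ and $(s',t')$ equals $4-(s-s')(t-t')$ — but this vanishes on $(s-s')(t-t')=4$ rather than on the hyperbola relation; since $2$ is a unit, replacing $s,t$ by $2s,2t$ scales the spurious constant away.

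Concretely, I would set
\[
\phi(s,t)=\begin{pmatrix} 2s & 1\\ 4st-1 & 2t\end{pmatrix},\qquad A=\phi(R^2),
\]
and verify three points. First, $\det\phi(s,t)=4st-(4st-1)=1$, so $A\subseteq\SL_2(R)$ is a legitimate set of vertices of $\sing 2{R}$. Second, $\phi$ is injective: because $2\in R^{\times}$, the $(1,1)$ and $(2,2)$ entries of $\phi(s,t)$ recover $s$ and $t$, so $\phi$ is a bijection $R^2\to A$. Third, for $(s,t),(s',t')\in R^2$ one expands
\[
\det\bigl(\phi(s,t)+\phi(s',t')\bigr)=4(s+s')(t+t')-2\bigl(4st+4s't'-2\bigr)=4\bigl(1-(s-s')(t-t')\bigr),
\]
and since $4\in R^{\times}$ this is zero precisely when $(s-s')(t-t')=1$. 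On the other hand, $(s,t)$ and $(s',t')$ are adjacent in $\hgraph R=\Cay(R^2,\mathcal S)$ exactly when $(s-s',t-t')\in\mathcal S$, i.e. when $(s-s')(t-t')=1$. Hence $\phi$ carries adjacent vertices to adjacent vertices and non-adjacent to non-adjacent, so it is an isomorphism from $\hgraph R$ onto the subgraph of $\sing 2{R}$ induced on $A$.

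There is no genuinely hard step once the family $\phi$ has been chosen; the only real use of the hypothesis is that invertibility of $2$, hence of $4$, removes the spurious constant factor relating $\det\bigl(\phi(s,t)+\phi(s',t')\bigr)$ to $1-(s-s')(t-t')$ (equivalently, it legitimizes the rescaling $s,t\mapsto 2s,2t$ above). I would also note that the degenerate case $R=0$ is harmless — both graphs are then a single vertex — and that in general neither graph has loops, consistently, since $\det\bigl(2\phi(s,t)\bigr)=4\neq 0$ and $0\cdot 0\neq 1$ whenever $R\neq 0$.
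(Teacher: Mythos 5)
Your proof is correct and takes essentially the same route as the paper: the paper embeds $R^2$ via $a_{x,y}=\left(\begin{smallmatrix}1-4xy & -2x\\ 2y & 1\end{smallmatrix}\right)$ (a product of two unipotents), computes $\det(a_{x_1,y_1}+a_{x_2,y_2})=4-4(x_2-x_1)(y_2-y_1)$, and uses invertibility of $2$ for injectivity and of $4$ to reduce adjacency to the hyperbola relation, exactly as you do. Your family $\left(\begin{smallmatrix}2s & 1\\ 4st-1 & 2t\end{smallmatrix}\right)$ is merely a different explicit parametrization implementing the same idea, and your computations check out.
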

\begin{proof}
For $x,y\in R$ define
\begin{equation}\label{axy}
a_{x,y}:= \begin{pmatrix}
1  & -2x    \\
 0 & 1 \\
\end{pmatrix} \begin{pmatrix}
1  & 0    \\
 2y & 1 \\
\end{pmatrix} = \begin{pmatrix}
1-4xy  & -2x    \\
 2y & 1 \\
\end{pmatrix} \in \SL_2(R).
\end{equation}
A simple computation shows that $\det(a_{x_1,y_1}+a_{x_2,y_2})=4-4(x_2-x_1)(y_2-y_1).$
Hence, vertices $a_{x_1,y_1}$ and $a_{x_2,y_2}$ of $\sing{2}{R}$ are adjacent if and only if $(x_2-x_1)(y_2-y_1)=1$. Note also that since $2$ is invertible, the map $(x,y) \mapsto a_{x,y}$ is a injective. This implies that the set 
$A= \{ a_{x,y}: x,y \in R \}$
 fulfills the requirements.  
\end{proof}
\begin{corollary}\label{comp-KL} For a commutative ring $R$ in which $2$ is invertible, we have $\chi(\sing{2}{R}) \ge \chi(\hgraph{R})$. 
\end{corollary}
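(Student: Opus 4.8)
The plan is to deduce this directly from Proposition~\ref{prop:embed} together with the elementary fact that the chromatic number is monotone under passing to subgraphs. First I would record the general principle: if $\G$ is any graph and $\HH$ is a subgraph of $\G$ (in particular, an induced subgraph), then every proper coloring of $\G$ restricts to a proper coloring of $\HH$, since an edge of $\HH$ is in particular an edge of $\G$ and hence has its two endpoints colored differently. Consequently $\chi(\HH)\le\chi(\G)$, and this inequality is invariant under graph isomorphism, so the same bound holds whenever $\HH$ is merely isomorphic to a subgraph of $\G$.

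Next I would invoke Proposition~\ref{prop:embed}: because $2$ is invertible in $R$, there is a subset $A\subseteq \SL_2(R)$ such that the subgraph of $\sing{2}{R}$ induced on $A$ is isomorphic to $\hgraph{R}$. Applying the principle above with $\G=\sing{2}{R}$ and $\HH$ equal to this induced subgraph gives $\chi(\hgraph{R})\le\chi(\sing{2}{R})$, which is exactly the assertion. There is essentially no obstacle here; all the real content sits in the explicit embedding of Proposition~\ref{prop:embed}, and the present statement is just its repackaging in terms of chromatic numbers. The only point worth stating carefully is that the monotonicity of $\chi$ needs $\hgraph{R}$ only to embed as a subgraph of $\sing{2}{R}$ — the fact that Proposition~\ref{prop:embed} produces an \emph{induced} subgraph is more than is required, but of course it suffices.
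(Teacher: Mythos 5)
Your proposal is correct and matches the paper's (implicit) argument exactly: the corollary is deduced from Proposition~\ref{prop:embed} via the standard monotonicity of the chromatic number under passing to (induced) subgraphs, which is precisely how the paper intends it.
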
 
 The upper bound is rather straightforward; the proof of the lower bound, however, relies on Estermann-Weil bounds for the Kloosterman sums. 
\begin{lemma}\label{dec}
Let $n \ge 2$, and let $R$ be a ring with a proper ideal $\mathfrak{a}$ such that $2$ is not a zero divisor in $R/\mathfrak{a}$. Then
$\chi(\sing{n}{R})\leq \chi( \sing{n}{R/\mathfrak{a}})$ and $\chi(\hgraph{R}) \leq \chi( \hgraph{R/\mathfrak{a}}).$
\end{lemma}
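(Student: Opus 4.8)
The plan is to produce in each case a graph homomorphism from the graph over $R$ to the graph over $R/\mathfrak{a}$, and then invoke the elementary fact that a homomorphism $f\colon \Gamma_1\to \Gamma_2$ between loopless graphs (a vertex map carrying every edge to an edge, hence carrying the endpoints of an edge to distinct vertices) forces $\chi(\Gamma_1)\le\chi(\Gamma_2)$: if $c$ is a proper colouring of $\Gamma_2$ with $\chi(\Gamma_2)$ colours, then $c\circ f$ is a proper colouring of $\Gamma_1$. The candidate map is, in both cases, reduction modulo $\mathfrak{a}$ applied entrywise (resp. coordinatewise). For $\sing{n}{R}$ this is $\pi\colon \SL_n(R)\to\SL_n(R/\mathfrak{a})$, $x\mapsto\bar x$, which is well defined on vertex sets because $\det(\bar x)=\overline{\det x}=\bar 1$; for $\hgraph{R}$ it is $\pi\colon R^2\to (R/\mathfrak{a})^2$, which is obviously well defined on vertices.

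First I would check that $\pi$ sends edges of $\sing{n}{R}$ to edges of $\sing{n}{R/\mathfrak{a}}$. If $x\sim y$, i.e. $\det(x+y)=0$ in $R$, then $\det(\bar x+\bar y)=\overline{\det(x+y)}=0$ in $R/\mathfrak{a}$, so it only remains to rule out the degenerate possibility $\bar x=\bar y$ (which would make $c\circ\pi$ fail to be proper). This is exactly where the hypothesis enters: if $\bar x=\bar y$, then $0=\det(\bar x+\bar y)=\det(2\bar x)=2^n\det(\bar x)=2^n$ in $R/\mathfrak{a}$; but since $\mathfrak{a}$ is proper, $R/\mathfrak{a}$ is a nonzero ring, and an easy induction shows that if $2$ is not a zero divisor in $R/\mathfrak{a}$ then $2^n\neq 0$ there for every $n\ge 1$ --- a contradiction. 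The same computation carried out in $R$ shows that $\sing{n}{R}$ is itself loopless, since $2^n\neq 0$ in $R/\mathfrak{a}$ forces $2^n\neq 0$ in $R$, so the assertion $\chi(\sing{n}{R})\le\chi(\sing{n}{R/\mathfrak{a}})$ is meaningful; and by the homomorphism principle above it now follows.

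The hyperbola case is slightly easier and does not even use the hypothesis on $2$: if $(a,b)\sim(c,d)$ in $\hgraph{R}$, i.e. $(c-a)(d-b)=1$, then $(\bar c-\bar a)(\bar d-\bar b)=1$ in $R/\mathfrak{a}$, and $(\bar a,\bar b)=(\bar c,\bar d)$ cannot happen because it would give $1=(\overline{c-a})(\overline{d-b})=0$, contradicting properness of $\mathfrak{a}$; hence coordinatewise reduction is a graph homomorphism $\hgraph{R}\to\hgraph{R/\mathfrak{a}}$ and the second inequality follows. There is no serious obstacle in this lemma; the only point that requires a moment's care is the possible collapse $\bar x=\bar y$ on an edge of $\sing{n}{R}$, and making explicit that ruling this out is precisely the purpose of the assumption that $2$ is not a zero divisor in $R/\mathfrak{a}$.
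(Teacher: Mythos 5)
Your proposal is correct and follows essentially the same route as the paper: pull back a proper colouring of the quotient graph along entrywise reduction modulo $\mathfrak{a}$, with the hypothesis that $2$ is not a zero divisor in $R/\mathfrak{a}$ used exactly to rule out the collapse $\pi(x)=\pi(y)$ on an edge via $2^n\det(\pi(x))=0$. The only difference is cosmetic: you phrase it as a graph homomorphism and note the hyperbola case needs no hypothesis on $2$, while the paper argues directly with the composed colouring.
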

\begin{proof}
Let $\pi:\SL_n(R)\to \SL_n(R/\mathfrak{a})$ be the group homomorphism induced by the natural ring homomorphism $R\to R/\mathfrak{a}$. Let $V(\sing{n}{R/\mathfrak{a}})$ denote the vertex set of the graph and consider a proper coloring $\Theta: V(\sing{n}{R/\mathfrak{a}}) \to [k]$, and define $\widetilde{ \Theta  }= \Theta \circ \pi: V(\sing{n}{R})
\to [k]$. We claim that $ \widetilde{\Theta}$ is a proper coloring of the graph $\sing{n}{R}$.
To see this, assume that $A$ and $B$ form an edge in 
$\sing{n}{R}$. Then $\det(A+B)=0$, implying that 
$\det( \pi(A)+\pi(B))=0$. If $\pi(A) \neq \pi(B)$, then we are done. If not, we have $2^n\det(\pi(A))=0$, which implies that $\det(\pi(A))=0$, contradicting the assumption that $\mathfrak{a}$ is proper. The first inequality follows from here. 
The second inequality can be proven 
in a similar way. 
\end{proof}
We will also need the following straightforward facts about $\hgraph{\ZZ/p^n\ZZ}$. 

\begin{lemma}\label{lemma=Gpn} Let $p\geq 5$ be a prime number and $n\geq 1$ a positive integer. Then the Cayley graph $\hgraph{\ZZ/p^n\ZZ}$ is a non-bipartite, connected $(p^n-p^{n-1})$-regular graph. 
 
\end{lemma}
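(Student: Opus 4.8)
The plan is to verify the three assertions of the lemma — the degree, connectedness, and non-bipartiteness — in turn, by entirely elementary arguments about the additive abelian group $R^{2}$, where $R=\ZZ/p^{n}\ZZ$. For the degree, I would first observe that $\mathcal{S}=\{(x,x^{-1}):x\in R^{\times}\}$, since the relation $xy=1$ in $R$ forces $x$ to be a unit and then determines $y=x^{-1}$. Consequently $|\mathcal{S}|=|R^{\times}|=\varphi(p^{n})=p^{n}-p^{n-1}$. Since $p$ is odd, $(x,x^{-1})\in\mathcal{S}$ implies $(-x,-x^{-1})\in\mathcal{S}$, so $\mathcal{S}$ is symmetric and $\hgraph{\ZZ/p^n\ZZ}=\Cay(R^{2},\mathcal{S})$ is a genuine undirected graph; as $(0,0)\notin\mathcal{S}$ it is loopless, and any Cayley graph is $|\mathcal{S}|$-regular, giving the claimed degree $p^{n}-p^{n-1}$.

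For connectedness I would show that $\mathcal{S}$ generates $R^{2}$. Because $p\geq 5$, both $2$ and $3$ are units of $R$, so $(1,1)$ and $(2,2^{-1})$ lie in $\mathcal{S}$, and the subgroup $\langle\mathcal{S}\rangle$ contains $2(1,1)-(2,2^{-1})=(0,\,2-2^{-1})$. Writing $u=2-2^{-1}$ we have $2u=3$, hence $u=3\cdot 2^{-1}\in R^{\times}$; therefore $\{0\}\times R=\langle(0,u)\rangle\subseteq\langle\mathcal{S}\rangle$, and then $(1,0)=(1,1)-(0,1)\in\langle\mathcal{S}\rangle$, so $R\times\{0\}\subseteq\langle\mathcal{S}\rangle$ as well. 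Thus $\langle\mathcal{S}\rangle=R^{2}$ and the graph is connected.

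For non-bipartiteness I would simply exhibit a closed walk of odd length: since $(1,1)\in\mathcal{S}$ and $p^{n}\cdot(1,1)=(0,0)$ in $R^{2}$, for any vertex $v$ the sequence $v,\ v+(1,1),\ v+2(1,1),\ \dots,\ v+p^{n}(1,1)=v$ is a closed walk of odd length $p^{n}$, and a graph admitting such a walk contains an odd cycle and hence is not bipartite. (Equivalently, a connected vertex-transitive bipartite graph has evenly many vertices, whereas $|R^{2}|=p^{2n}$ is odd.) None of these steps is delicate; the only place where the hypothesis $p\geq 5$ is genuinely used — rather than just $p$ being odd — is the connectedness argument, where one needs $2-2^{-1}=3\cdot 2^{-1}$ to be a unit, so that is the step I would be most careful with.
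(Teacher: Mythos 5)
Your proof is correct and follows essentially the same route as the paper's: compute $|\mathcal{S}|=\varphi(p^n)$ for the degree, show $(1,1)$ and $(2,2^{-1})$ generate $(\ZZ/p^n\ZZ)^2$ for connectedness (your explicit elimination producing the unit $3\cdot 2^{-1}$ is just the unfolded version of the paper's determinant $-3/2$ being a unit), and use the odd closed walk along $(1,1)$ of length $p^n$ for non-bipartiteness. No gaps.
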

\begin{proof} Obviously $\hgraph{\ZZ/p^n\ZZ}=\Cay(\ZZ/p^n\ZZ,\mathcal{S})$ is $(p^n-p^{n-1})$-regular graph since 
$$\mathcal{S}=\{(x,y)\in (\ZZ/p^n\ZZ)^2: xy=1\},$$
 has $(p^n-p^{n-1})$ elements. To show that $\hgraph{\ZZ/p^n\ZZ}$ is connected, we prove that $\mathcal{S}$ generates the additive group $(\ZZ/p^n\ZZ)^2$. Clearly $v_1=(1,1), v_2=(2,1/2) \in \mathcal{S}$ and
$
\det\begin{pmatrix}
1 & 1\\
2 & 1/2
\end{pmatrix}=-3/2,$
which is a unit if $p\geq 5$. Hence $\{v_1,v_2\}$ generates $(\ZZ/p^n\ZZ)^2$. Finally notice that for $0\leq i\leq p^n-1$, the vertices $(i,i)$ and $(i+1,i+1)$ are adjacent. Since $p$ is an odd prime $\hgraph{\ZZ/p^n\ZZ}$ contains an odd cycle and so 
$\hgraph{\ZZ/p^n\ZZ}$ is not a bipartite graph.
\end{proof}

Let $u,v$ be two integers and $m$ a positive integer. The associated Kloosterman sum is defined by
$$
\Kl(u,v,m):=\sum_{\substack{x=1\\ \gcd(x,m)=1}}^{m} \exp{\left(\frac{2\pi i (ux+vx^*)}{m}\right)},
$$
where $x^*$ is the inverse of $x$ modulo $m$. By the Estermann-Weil bound~\cite{Estermann}, for $p\geq 3$ we have
\begin{equation}\label{Kloosterman}
|\Kl(u,v,p^n)|\leq 2\gcd(u,v,p^n)^{1/2}p^{n/2},
\end{equation}
\begin{proof}[Proof of Theorem \ref{Klo-SL}]
Let $\lambda_0\geq \lambda_1\geq\dots\geq \lambda_{p^{2n}-1}$ be the spectrum of $\hgraph{\ZZ/p^n\ZZ}$. From Lemma~\ref{lemma=Gpn} we have $ \lambda_0=p^n-p^{n-1}$ and  
\begin{equation}\label{strict}
\max\left\{|\lambda_1|,|\lambda_{p^{2n}-1}|\right\}< p^n-p^{n-1}.
\end{equation}
 Since the $(\ZZ/p^n\ZZ)^2$ is an abelian group then all of its irreducible representations are one-dimensional. So, by Theorem~\ref{Shah}, for each $0\leq i\leq p^{2n}-1$ there exists two integers $1\leq u_i,v_i\leq p^n$ such that
$$
\lambda_i=\sum_{\substack{(x,y)\in (\ZZ/p^n\ZZ)^2\\ xy=1}}\exp{\left(\frac{2\pi i(u_ix+v_iy)}{p^n}\right)}=\Kl(u_i,v_i,p^n).
$$ 
By~\eqref{strict} for $1\leq i\leq p^{2n}-1$ we have $\gcd(u_i,v_i,p^n)\leq p^{n-1}$.  
From~\eqref{Kloosterman} we have 
$$
|\lambda_i|\leq 2p^{(n-1)/2}p^{n/2}=2p^{n-1/2}, \quad 1\leq i\leq p^{2n}-1.
$$  
This implies that 
$
\max\left\{|\lambda_1|,|\lambda_{p^{2n}-1}|\right\}\leq 2p^{n-1/2}
$.
Therefore by Lemma~\ref{Sar} we can deduce that
$$
\chi(\hgraph{\ZZ/p^n\ZZ})\geq \frac{p^{n}-p^{n-1}}{\max\{|\lambda_1|,|\lambda_{p^{2n}-1}|\}}\geq \frac{p^{n}-p^{n-1}}{2p^{n-1/2}}\geq \frac{\sqrt{p}}{4}.
$$
Now, from Corollary \ref{comp-KL}, we have
$\chi(\sing{2}{\ZZ/p^n\ZZ})\geq \chi(\hgraph{\ZZ/p^n\ZZ})\geq \sqrt{p}/4.$ The upper bound 
immediately follows from Lemma~\ref{dec} for $\mathfrak{a}=p^{n-1}\ZZ/p^n\ZZ$.
\end{proof}
\section*{Acknowledgement} During the completion of this work, M.B. was supported by a postdoctoral fellowship from the University of Ottawa. He wishes to thank
his supervisors Vadim Kaimanovich, Hadi Salmasian and Kirill Zainoulline. The authors are grateful to Amin Bahmanian, Boris Bukh and Mike Newman with whom the authors discussed various parts of this paper. M.B would like to thank Alborz Fazaeli for providing him a program to compute chromatic number of small graphs.
K.M-K would like to specially thank Saieed Akbari for introducing him to this problem, and also mentioning the reference~\cite{Tomon}. Authors would like to especially thanks the referees for several detailed comments that lead 
to improving the exposition of the paper and correcting some inaccuracies. 
\bibliographystyle{plain}

\begin{thebibliography}{10}

\bibitem{AJF}
S.~Akbari, M.~Jamaali, and S.~A. Seyed~Fakhari.
\newblock The clique numbers of regular graphs of matrix algebras are finite.
\newblock {\em Linear Algebra Appl.}, 431(10):1715--1718, 2009.

\bibitem{Alon13}
Noga Alon.
\newblock The chromatic number of random {C}ayley graphs.
\newblock {\em European J. Combin.}, 34(8):1232--1243, 2013.

\bibitem{Anderson}
David~F. Anderson and Ayman Badawi.
\newblock The total graph of a commutative ring.
\newblock {\em J. Algebra}, 320(7):2706--2719, 2008.

\bibitem{Babai3}
L\'aszl\'o Babai.
\newblock Chromatic number and subgraphs of {C}ayley graphs.
\newblock pages 10--22. Lecture Notes in Math., Vol. 642, 1978.

\bibitem{Babai}
L{\'a}szl{\'o} Babai.
\newblock Spectra of {C}ayley graphs.
\newblock {\em J. Combin. Theory Ser. B}, 27(2):180--189, 1979.

\bibitem{Pyber}
L{\'a}szl{\'o} Babai, Nikolay Nikolov, and L{\'a}szl{\'o} Pyber.
\newblock Product growth and mixing in finite groups.
\newblock In {\em Proceedings of the {N}ineteenth {A}nnual {ACM}-{SIAM}
  {S}ymposium on {D}iscrete {A}lgorithms}, pages 248--257. ACM, New York, 2008.

\bibitem{BKMS}
Mohammad Bardestani, Camelia Karimianpour, Keivan Mallahi-Karai, and Hadi
  Salmasian.
\newblock Faithful representations of {C}hevalley groups over quotient rings of
  non-{A}rchimedean local fields.
\newblock {\em Groups Geom. Dyn.}, 11(1):57--74, 2017.

\bibitem{BMK}
Mohammad Bardestani and Keivan Mallahi-Karai.
\newblock On a generalization of the {H}adwiger-{N}elson problem.
\newblock {\em Israel J. Math.}, 217(1):313--335, 2017.

\bibitem{Bollobas}
B{\'e}la Bollob{\'a}s.
\newblock {\em Modern graph theory}, volume 184 of {\em Graduate Texts in
  Mathematics}.
\newblock Springer-Verlag, New York, 1998.

\bibitem{Breuillard}
Emmanuel Breuillard.
\newblock A brief introduction to approximate groups.
\newblock In {\em Thin groups and superstrong approximation}, volume~61 of {\em
  Math. Sci. Res. Inst. Publ.}, pages 23--50. Cambridge Univ. Press, Cambridge,
  2014.

\bibitem{Cam}
Peter~J. Cameron.
\newblock Research problems from the {BCC}22.
\newblock {\em Discrete Math.}, 311(13):1074--1083, 2011.

\bibitem{Sarnak}
Giuliana Davidoff, Peter Sarnak, and Alain Valette.
\newblock {\em Elementary number theory, group theory, and {R}amanujan graphs},
  volume~55 of {\em London Mathematical Society Student Texts}.
\newblock Cambridge University Press, Cambridge, 2003.

\bibitem{Shahshahani}
Persi Diaconis and Mehrdad Shahshahani.
\newblock Generating a random permutation with random transpositions.
\newblock {\em Z. Wahrsch. Verw. Gebiete}, 57(2):159--179, 1981.

\bibitem{Dorn}
Larry Dornhoff.
\newblock {\em Group representation theory. {P}art {A}: {O}rdinary
  representation theory}.
\newblock Marcel Dekker, Inc., New York, 1971.
\newblock Pure and Applied Mathematics, 7.

\bibitem{Estermann}
T.~Estermann.
\newblock On {K}loosterman's sum.
\newblock {\em Mathematika}, 8:83--86, 1961.

\bibitem{Jarden}
Michael~D. Fried and Moshe Jarden.
\newblock {\em Field arithmetic}, volume~11 of {\em Ergebnisse der Mathematik
  und ihrer Grenzgebiete. 3. Folge. A Series of Modern Surveys in Mathematics
  [Results in Mathematics and Related Areas. 3rd Series. A Series of Modern
  Surveys in Mathematics]}.
\newblock Springer-Verlag, Berlin, third edition, 2008.
\newblock Revised by Jarden.

\bibitem{Gowers}
W.~T. Gowers.
\newblock Quasirandom groups.
\newblock {\em Combin. Probab. Comput.}, 17(3):363--387, 2008.

\bibitem{LS}
Vicente Landazuri and Gary~M. Seitz.
\newblock On the minimal degrees of projective representations of the finite
  {C}hevalley groups.
\newblock {\em J. Algebra}, 32:418--443, 1974.

\bibitem{LangWeil}
Serge Lang and Andr{\'e} Weil.
\newblock Number of points of varieties in finite fields.
\newblock {\em Amer. J. Math.}, 76:819--827, 1954.

\bibitem{LPS}
A.~Lubotzky, R.~Phillips, and P.~Sarnak.
\newblock Ramanujan graphs.
\newblock {\em Combinatorica}, 8(3):261--277, 1988.

\bibitem{Lubotzky}
Alexander Lubotzky.
\newblock {\em Discrete groups, expanding graphs and invariant measures},
  volume 125 of {\em Progress in Mathematics}.
\newblock Birkh\"auser Verlag, Basel, 1994.
\newblock With an appendix by Jonathan D. Rogawski.

\bibitem{Malle}
Gunter Malle and Donna Testerman.
\newblock {\em Linear algebraic groups and finite groups of {L}ie type}, volume
  133 of {\em Cambridge Studies in Advanced Mathematics}.
\newblock Cambridge University Press, Cambridge, 2011.

\bibitem{Margulis}
G.~A. Margulis.
\newblock Explicit group-theoretic constructions of combinatorial schemes and
  their applications in the construction of expanders and concentrators.
\newblock {\em Problemy Peredachi Informatsii}, 24(1):51--60, 1988.

\bibitem{Steinberg}
Robert Steinberg.
\newblock {\em Lectures on {C}hevalley groups}.
\newblock Yale University, New Haven, Conn., 1968.
\newblock Notes prepared by John Faulkner and Robert Wilson.

\bibitem{Tao}
Terence Tao.
\newblock Mixing for progressions in nonabelian groups.
\newblock {\em Forum Math. Sigma}, 1:e2, 40, 2013.

\bibitem{Tomon}
Istv{\'a}n Tomon.
\newblock On the chromatic number of regular graphs of matrix algebras.
\newblock {\em Linear Algebra Appl.}, 475:154--162, 2015.

\end{thebibliography}

\end{document}